\documentclass[11pt]{amsart}
\usepackage[a4paper,margin=0.9in]{geometry}
\usepackage{comment}
\usepackage[colorlinks,citecolor=magenta,linkcolor=black]{hyperref}
\pdfpagewidth=\paperwidth \pdfpageheight=\paperheight
\usepackage{amsfonts,amssymb,amsthm,amsmath,tabu,url}
\usepackage{pgf}
 \usepackage{array}
 \usepackage{tikz-cd}
 \usepackage{pstricks}
 \usepackage{pstricks-add}
 \usepackage{pgf,tikz}
 \usetikzlibrary{automata}
 \usetikzlibrary{arrows}
 \usepackage{indentfirst}
\usepackage{tabularx} 

\DeclareMathOperator{\Sing}{Sing}
\DeclareMathOperator{\Irr}{Irr}

\def\cC{\mathcal{C}}

\def\cP{{\mathcal P}}
\def\cN{{\mathcal N}}

\def\CC{\mathbb{C}}

\def\PP{\mathbb{P}}

\theoremstyle{plain}
\newtheorem{thm}{Theorem}
\newtheorem{theorem}[thm]{Theorem}

\newtheorem{lemma}[thm]{Lemma}
\newtheorem{proposition}[thm]{Proposition}
\newtheorem{corollary}[thm]{Corollary}

\theoremstyle{definition}

\newtheorem{remark}[thm]{Remark}

\newtheorem{question}[thm]{Question}

\newtheorem{thevarthm}[thm]{\varthmname}

\newenvironment{varthm*}[1]{\trivlist\item[]{\bf #1.}\it}{\endtrivlist}

\def\subclassname{{\bfseries Mathematics Subject Classification
(2020)}\enspace}
\def\subclass#1{\par\addvspace\medskipamount{\rightskip=0pt plus1cm
\def\and{\ifhmode\unskip\nobreak\fi\ $\cdot$
}\noindent\subclassname\ignorespaces#1\par}}

\begin{document}
\title{Modular inequalities and Alexander polynomials of pencil type conic-line arrangements}

\author[Anca~M\u acinic]{Anca~M\u acinic$^{*}$}

\thanks{$^{*}$ Partially supported by the project "Singularities and Applications'' - CF 132/31.07.2023 funded by the European Union - NextGenerationEU - through Romania's National Recovery and Resilience Plan.}
\address{Simion Stoilow Institute of Mathematics, 21 Calea Grivitei Street, 010702 Bucharest, Romania}
\email{Anca.Macinic@imar.ro}

\date{\today}
 \subjclass[2010]{14H50, 32S55, 14C21, 14F45, 14Q05, 14B05, 32S22}

\keywords{plane projective curve; conic-line arrangement; pencil; Alexander polynomial; Milnor fiber monodromy}
 
\maketitle

\thispagestyle{empty}
\begin{abstract}
We use recent results, among which modular inequalities for curves, to determine the Alexander polynomials for some classes of pencil-type conic-line arrangements. For these classes of curves we prove that the Alexander polynomial is (at least partially) combinatorial. To this end, we exemplify new  techniques that are suitable for broader use, lending themselves to more general classes of curves. 
\end{abstract}

\section{Introduction}

Seen as a natural generalization of arrangements of projective lines in the field of projective plane curves, {\it conic-line arrangements} (or, equivalently, {\it conic-line curves}) are defined as curves whose irreducible components are lines and smooth conics. There are numerous recent articles that explore algebraic, geometric, topological and combinatorial properties of conic-line arrangements, see for instance \cite{BMR, DLPU, DPS, M, MP1, Pk, ST}. 

Given a complex projective plane curve $\cC$ defined by a homogeneous degree $d$ polynomial $f_{\cC} \in \CC[x,y,z]$, we consider the Milnor fiber associated to $\cC$, i.e. the hypersurface in $\CC^3$ defined by $$\mathcal{F}(\cC): f_{\cC}=1.$$ The multiplication by a primitive root of unity of order $d$ defines an action on $\mathcal{F}(\cC)$. This induces an action on the homology  group $H_1(\mathcal{F}(\cC), \mathbb{Q})$, called algebraic monodromy, which makes $H_1(\mathcal{F}(\cC), \mathbb{Q})$ a $\mathbb{Q}[t^{\pm 1}]$-module, where $t$ acts by the algebraic monodromy.
 Since the monodromy is of finite order $d$, $H_1(\mathcal{F}(\cC), \mathbb{Q})$ is a torsion module.
The Alexander polynomial of a curve, $\Delta_{\cC}(t)$, is defined as the characteristic polynomial of the monodromy action on $H_1(\mathcal{F}(\cC), \mathbb{Q})$ and, in light of the above discussion, it satisfies the formula
\begin{equation}
\label{eq:Alexander_poly}
\Delta_{\cC}(t) = \Pi_{k|d}\phi_k^{b_k},
\end{equation}
where $\phi_k$ is the $k$-th cyclotomic polynomial.

Computing the monodromy action on the homology of the Milnor fiber is a very difficult problem, even in the case of degree $1$ homology  (meaning,  computing the Alexander polynomial), and even if we narrow down the problem to the case of arrangements of projective lines. To name just a few papers on the subject, see \cite{BDS, CL2, CS, D0, DIM, DS, Li2, Li3, Li4, MPP}.  The study of the Alexander polynomial of a curve is also relevant in connection to the fundamental group of the complement of the curve in the complex projective plane, see for instance \cite{ACT}.

In \cite{CMc}, based on a result of Papadima-Suciu from \cite{PS2},  the authors introduce modular inequalities for (complements of) complex projective plane curves, inequalities that give upper bounds for the multiplicities of roots of the associated Alexander polynomial, provided that those roots are powers of primes. In the same paper  \cite{CMc}, lower bounds are given for the  multiplicities of roots of the associated Alexander polynomial, for quasi fiber-type curves. 
In this note we  will use these results to compute  Alexander polynomials for pencil-type conic-line arrangements, but the techniques employed for this purpose are likely to be of use in the computation of  the Alexander polynomial of other general classes of curves.

 By {\it pencil of conic-line arrangements}, or, equivalently, {\it pencil of conic-line curves}, we mean a primitive pencil $\cP$ of degree $d \geq 3$ curves having at least three reducible and reduced members whose irreducible components are lines and smooth conics. We will call these members {\it conic-line fibers}. 
    A {\it pencil-type conic-line arrangement} is a reduced curve that can be realized as an union of  conic-line fibers  of a  pencil of conic-line arrangements.
 
An intensely studied and still open problem concerning arrangements of hyperplanes, related to the Milnor fiber monodromy problem,  is whether the Alexander polynomial is determined by the combinatorics of  the intersection lattice of the arrangement, i.e. if it is {\it combinatorial}.  A comprehensive presentation of the problem and various approaches to solve it can be found in \cite[Subsection 1.8]{PS3}.
The general case of this problem can be reduced to the case of arrangements of projective lines. 
 On the other hand, we know that this conjecture does not hold for plane projective curves in general.  In the context of curves, by  {\it combinatorial} we mean determined by the weak combinatorial type of the curve, see Section \S\ref{sec:preliminaries} for the definition. Counterexamples of the combinatorial determination of the Alexander polynomial for curves can be found among {\it Zariski pairs} 
  (briefly, Zariski pairs are pairs of curves with the same weak combinatorial type but different embedded in $\PP^2$ topology, see for instance \cite{ACT} for a detailed exposition on the subject).
One may consider for instance the pair of irreducible sextics that Zariski introduced in \cite{Zar}.
The two curves are combinatorially equivalent but it turns out they have distinct Alexander polynomials. 
Examples involving reducible curves that are combinatorially equivalent but have distinct Alexander polynomials 
 have bee constructed as well, see for instance Artal Bartolo's  example from \cite{AC} of a pair of degree $6$ curves, each consisting of a union of a smooth cubic and three lines, which is also a Zariski pair.  Or, more generally, we refer to the Zariski pairs of sextics from Oka's papers  \cite{Oka, OkaII}, emerging from sextics classification results.

However, we were able to detect a distinctive phenomenon that happens for a subclass of pencil-type conic-line arrangements. 
Specifically, for this subclass, for which we give formulas for the Alexander polynomials, in Theorems \ref{thm:Alex_poly_formulas} and \ref{thm:m=4_intro}, we prove that the multiplicities of the roots of the Alexander polynomials that we compute are {\it combinatorial}, see Theorem \ref{thm:main}. These results are reminiscent of similar results in the theory of arrangements of hyperplanes, see \cite{MP, PS3}.

In \cite{CL} Cogolludo-Libgober prove that in a pencil-type conic-line arrangement having only ordinary singular points as base points and which is realized as the union of $m$ conic-line fibers of a conic-line pencil of curves of degree $d \geq 3$, necessarily $m \leq 6$. We should point out that this statement, recalled in Theorem \ref{thm:CL_classification}, involves two cases different in nature, see \cite[Proposition 4.2]{CL} for the case $d \geq 4$ and \cite[Corollary 4.4]{CL} \& \cite{R} for the case $d=3$. 
The bound on $m$ in the case $d=3$ is sharp, as illustrated by an example of Ruppert (see \cite{R}) of a a pencil-type conic-line arrangement which is the union of $6$ reducible conic-line fibers, each fiber being the union of a line and a smooth conic.  The bound on $m$ in the case $d=4$ is also known to be sharp, 
 see \cite[Proposition 4.5]{CL}.

We study here the class of pencil-type conic-line arrangements with transversal base points coming from pencils of degree $3$  curves. 

\begin{theorem}
\label{thm:main}
Let $\mathcal{C}$ be a pencil-type conic-line arrangement arising from a pencil of degree $3$ curves such that the singularities of $\cC$ at the base points of the pencil are transversal.
Then the multiplicity of primitive roots of unity of odd order as roots of the Alexander polynomial $\Delta_{\cC}(t)$ is combinatorial. Moreover, if there are at most two non-ordinary singular points in $\Sing(\cC)$, then the multiplicity of primitive roots of unity of order powers of $2$ as roots of the Alexander polynomial $\Delta_{\cC}(t)$ is combinatorial as well.
\end{theorem}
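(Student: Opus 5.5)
The plan is to compute $\Delta_{\cC}(t)$ explicitly for each admissible configuration and then check that the resulting multiplicities depend only on the weak combinatorial type. By Theorem \ref{thm:CL_classification} and the case $d=3$ of \cite{CL, R}, $\cC$ is a union of $m$ conic-line fibers of a pencil of cubics with $3\le m\le 6$. Since the base points are transversal, the nine base points (with multiplicity) are ordinary points of $\cC$ of multiplicity $m$. This gives a finite list of possible shapes: each fiber is three concurrent or non-concurrent lines, or a line plus a conic (tangent or transversal).

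\textbf{Odd-order roots.} The quasi fiber-type structure coming from the pencil gives the lower bound of \cite{CMc}: a primitive root of unity of order $k$ dividing $m$ appears with multiplicity at least $m-2$. The exponent $b_k$ vanishes unless $k \mid d = 3m$. For $k$ a prime power $p^s$ with $p$ odd, I will apply the modular inequalities of \cite{CMc}, which bound $b_{p^s}$ by a quantity read off from the combinatorics. For our curves this is the mod $p$ first Betti number of a combinatorial cochain complex built from the incidence of components with the singular points. The aim is to show that this upper bound equals the lower bound $m-2$ when $p^s \mid m$, and equals $0$ otherwise. Odd prime powers dividing $3m$ with $m\le 6$ are only $3,5,9$, so the check reduces to $m\in\{3,5,6\}$ and $k=3,5,9$. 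The remaining odd orders ($k=15$, or $k$ composite) need separate handling. I expect a divisibility argument on the multiplicities of singular points, using Libgober's criterion that the roots of $\Delta_\cC$ are governed by the local Alexander polynomials of the singularities, to force $b_k=0$ for such $k$ unless $k\mid m$. The set of non-ordinary singularities that can occur (tacnodes $A_3$ from line-conic tangency, and triple points off the base locus) is small, so this is a finite check.

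\textbf{Orders $2^s$.} Here the modular bound is weaker, because the tacnodes and other non-ordinary points contribute to the local Alexander polynomials at $-1$ and $\pm i$. The plan is to use the hypothesis of at most two non-ordinary singular points to control the superabundance term. That is, I will compute $b_{2^s}$ as the defect of an ideal of curves of degree $d - 3 - d/2^s$ (or similar) passing through the conditions imposed by those points. With at most two such points, the conditions are independent, so the defect is determined by the number and type of the singular points alone. I will combine this with the modular upper bound for $p=2$ to pin down the exact value.

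\textbf{Main obstacle.} The hard step is showing that the modular upper bound matches the lower bound in every case. This is especially delicate for $p=2$ and $p=3$, where $p$ can divide the multiplicities $m$ of the base points and the local orders of the tacnodes. I would first try the inequality with all fibers treated uniformly via the pencil map $\PP^2 \dashrightarrow \PP^1$. If that gap does not close, I would fall back on case-by-case computation in the classified configurations, in particular Ruppert's $m=6$ example and the $m=4$ cases of Theorem \ref{thm:m=4_intro}.
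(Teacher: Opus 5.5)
\textbf{There are two genuine gaps, one for odd orders and one for orders $2^s$.}

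\textbf{Odd orders.} Your argument follows the paper's sandwich: Theorem \ref{thm:lower_bound} from below and \eqref{eq:beta_ineq} from above. But the target you set, ``$\beta_p=m-2$ whenever $p^s\mid m$'', fails in an admissible case. The Ceva arrangement $(x^3-y^3)(y^3-z^3)(z^3-x^3)=0$ is the union of the $m=3$ fibers $x^3-y^3$, $y^3-z^3$, $x^3-z^3$ of a cubic pencil, with ordinary triple points at the nine base points. It has $\beta_3=2$, while Theorem \ref{thm:lower_bound} only gives $b_3\ge 1$. No computation of $\beta_3$ closes this gap. The paper imports $b_3=2$ from \cite{DIM, PS3}, which again yields $b_3=\beta_3$.

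There are two smaller slips. For $k=9$ the modular bound is $\beta_3\ge 1$ (when $m\in\{3,6\}$), not $0$, so $b_9=0$ must come from the local Alexander polynomials, as in Lemma \ref{lem:b_q_trivial}. And you dropped $m=4$, $k=3$ from your list, although $3\mid 12$.

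\textbf{Orders $2^s$.} The superabundance plan is not yet an argument.
\begin{itemize}
\item Independence of conditions is the wrong mechanism. A zero defect would give $b_{2^s}=0$, contradicting $b_{2^s}\ge m-2$.
\item The nonzero defect comes from the nine base points, a complete intersection of two cubics, failing to impose independent conditions.
\item ``Determined by the number and type of the singular points alone'' is exactly what fails for curves in general (Zariski pairs).
\item To make this route work you would have to identify the relevant ideals, compute the base-point defect from the pencil structure, and prove that the tacnode conditions are independent modulo it. None of this is done.
\end{itemize}

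This route is also unnecessary, because you misdiagnose why the mod $2$ bound looks weak. The tacnode's local polynomial $(t-1)(t^2+1)$ has no root $-1$. The real issue is that at an $A_3$ point, equation \eqref{eq:system} reads $2(a_i-a_j)=0$, which is vacuous over $\mathbb{Z}_2$. So the two components of that fiber are not forced to have equal coefficients.

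The paper uses the hypothesis directly. Put the (at most two) $A_3$ points in $\cC_1,\cC_2$. Then every other fiber has a single common coefficient $a_q$. The base-point equations $a_1^r+a_2^s+a_3+\dots+a_m=0$ exist for every $r,s$ by B\'ezout, and they force $a_1^1=a_1^2$ and $a_2^1=a_2^2$. Hence $\beta_2\le m-2$, and the sandwich gives $b_2=b_4=2$ for $m=4$ and $b_2=4$ for $m=6$. All other $2$-power orders vanish since $2^s\nmid 3m$.
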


Furthermore, we determine the Alexander polynomial of the curves $\cC$ described in the above theorem.

\begin{theorem}
\label{thm:Alex_poly_formulas}
Let $\mathcal{C}$ be a pencil-type conic-line arrangement which is realized as the union of $m \geq 3$ conic-line fibers of a conic-line pencil of degree $3$ curves. Assume that the singularities of $\mathcal{C}$ at the base points of the pencil are ordinary.
Denote $s:=|\Irr(\cC)|$. Then:
\begin{enumerate}
\item If $m=3$, then $\Delta_{\cC}(t) = (t-1)^{s-1}(t^2+t+1)$, except if $\cC$ is the Ceva arrangement, in which case  $\Delta_{\cC}(t) = (t-1)^{s-1}(t^2+t+1)^2$.
\item If $m=4$, then $\Delta_{\cC}(t) = (t-1)^{s-1}(t+1)^{b_2}(t^2+1)^{b_4}$ and $b_2, b_4 \geq 2$. 
\item If $m=5$, then $\Delta_{\cC}(t) = (t-1)^{s-1}(t^4+t^3+t^2+t+1)^3$.
\item If $m=6$, then $\Delta_{\cC}(t) = (t-1)^{s-1}(t+1)^{b_2}(t^2+t+1)^4(t^2-t+1)^{b_{6}}$ and $b_2, b_6 \geq 4$.
\end{enumerate}
Moreover, if there are at most two non-ordinary singularities in $\Sing(\cC)$, then 
\begin{enumerate}
\item If $m=4$, then $b_2 = b_4 =2$. 
\item If $m=6$, then $b_2 =4$. 
\end{enumerate}
\end{theorem}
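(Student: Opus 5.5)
The proof will combine three tools: the quasi fiber-type lower bounds of \cite{CMc}, the modular inequalities of \cite{CMc}, and the classical divisibility constraints on $\Delta_{\cC}(t)$.

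First, the shape of $\Delta_{\cC}(t)$. Since $\cC$ is the union of $m$ fibers of a pencil of cubics, $d=3m$. The multiplicity of $t-1$ is $b_1(\PP^2\setminus\cC)=s-1$. For $\phi_k$ with $k\neq 1$ we use the standard divisibility results. By Libgober's divisibility, $\Delta_{\cC}$ divides the product of the local Alexander polynomials at the singular points; equivalently, a root $\lambda\neq 1$ must be an eigenvalue of some local monodromy and satisfy $\lambda^d=1$. The singularities of $\cC$ are of two kinds. At the base points they are ordinary, of multiplicity $m$, so they contribute $\lambda^m=1$. Away from the base points they are nodes or tacnode-type points lying inside single fibers, and combining these with $\lambda^{3m}=1$ leaves only $k\mid m$ besides the contributions from $\lambda^3=1$.

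Second, the lower bounds. The pencil gives an orbifold-type map $\PP^2\setminus\cC\to\PP^1\setminus\{m \text{ points}\}$, which makes $\cC$ quasi fiber-type. The \cite{CMc} lower bound then gives $b_k\ge m-2$ for every $k\mid m$, $k>1$. This yields:
\begin{itemize}
\item the factor $\phi_3$ when $m=3$;
\item $b_2,b_4\ge 2$ when $m=4$;
\item $b_5\ge 3$ when $m=5$;
\item $b_2,b_3,b_6\ge 4$ when $m=6$.
\end{itemize}
The extra $\phi_3$ for Ceva is explained by the extra pencil, and in fact by the fact that Ceva is a line arrangement with known $\Delta$.

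Third, the upper bounds, which give the equalities. Apply the modular inequalities of \cite{CMc}: for $k=p^r$, $b_k$ is bounded by the dimension of $H^1$ of the complement with $\mathbb{F}_p$ coefficients minus $(s-1)$, i.e. by the $\mathbb{F}_p$-resonance/Aomoto-type rank. That rank is computed from the weak combinatorics, namely the incidences of the components at the singular points. This already shows that all prime-power $b_k$ are combinatorial, which gives Theorem \ref{thm:main} for odd primes.
\begin{itemize}
\item For $m=5$ we get $b_5\le 3$.
\item For $m=3$ we get $b_3\le 1$ unless there is an extra combinatorial resonance; classifying the $m=3$ cubic pencils shows this happens only for Ceva.
\item For $m=6$, $b_3=4$ follows from the $\mathbb{F}_3$ bound. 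The case $b_6$ is not a prime power, so only $b_6\ge4$ remains.
\end{itemize}

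The main obstacle is the prime $2$. When there are non-ordinary points (tacnodes from line–conic tangencies inside fibers), the $\mathbb{F}_2$ computation can acquire extra classes. Under the hypothesis of at most two non-ordinary points, I would explicitly compute $\dim H^1(\PP^2\setminus\cC,\mathbb{F}_2)$ via the Fox calculus / resonance of the combinatorial cochain complex. The aim is to show it exceeds $s-1$ by exactly $2$ for $m=4$, giving $b_2=b_4=2$, and by exactly $4$ for $m=6$, giving $b_2=4$. This uses the Cogolludo–Libgober classification (Theorem \ref{thm:CL_classification}) to enumerate the possible fiber types and the positions of the few tangency points. I expect this case analysis to be the hardest step.
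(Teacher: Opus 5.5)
Your skeleton matches the paper's: Libgober divisibility restricts the possible cyclotomic factors, the pencil lower bound (Theorem \ref{thm:lower_bound}) gives $b_k\ge m-2$ for $k\mid m$, and the modular bound gives $b_{p^r}\le\beta_p$. As written, though, the proposal has genuine gaps.

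\textbf{First gap: $b_3=0$ for $m=4,5$ is never proved.} Your divisibility step explicitly leaves $\phi_3$ as a possible factor. Nothing afterwards removes it for $m=4$ and $m=5$, yet the stated formulas there contain no $t^2+t+1$, so you need $b_3=0$. The paper gets this from $\beta_3=0$:
\begin{itemize}
\item two components lying in different fibers always meet (B\'ezout), and only at base points;
\item each base point is an ordinary point of multiplicity $m\not\equiv 0 \pmod 3$;
\item so Proposition \ref{prop:ordinary_sing_system} forces all coefficients $a_i$ to be equal, and the solution space of the system \emph{S} is one-dimensional.
\end{itemize}
Alternatively, apply the component-wise part of Theorem \ref{thm:global|local} to a component of a fiber that is not a pencil of lines; such a fiber exists by Proposition \ref{prop:nr_pencil_of_lines}.

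\textbf{Second gap: the upper bounds are asserted, not proved, and attached to the wrong object.} Here $H^1(\PP^2\setminus\cC,\mathbb{F}_p)=\mathrm{Hom}(\mathbb{Z}^s/(d_1,\dots,d_s),\mathbb{F}_p)$ has dimension exactly $s-1$, since every fiber contains a line. So ``$\dim H^1$ minus $(s-1)$'' is $0$, and your plan to show that $\dim H^1(\PP^2\setminus\cC,\mathbb{F}_2)$ exceeds $s-1$ by $2$ or $4$ cannot succeed. The relevant number is $\beta_p=\dim H^1(A^*_{W_\cC},\wedge\omega_1)$, one less than the dimension of the solution space of \emph{S}, and it must be computed case by case. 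Over $\mathbb{F}_p$ the local equations are:
\begin{itemize}
\item a node forces equal coefficients;
\item a tacnode gives $2(a_L-a_C)=0$, which is nontrivial only for odd $p$;
\item the triple point of a fiber that is a pencil of lines gives equalities for $p\neq 3$, but only a sum relation for $p=3$ (you omit these points entirely);
\item a base point gives $\sum a=0$ when $p\mid m$, and equalities otherwise.
\end{itemize}
These yield $\beta_5=3$ for $m=5$, and $\beta_3\le 4$ for $m=6$; the latter uses $n=0$ from Proposition \ref{prop:nr_pencil_of_lines}, so no triple points occur. For non-Ceva $m=3$, $\beta_3=1$ needs a real case analysis on the number $n\in\{0,1,2\}$ of pencil-of-lines fibers. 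For $n=2$ it also uses that the third fiber cannot be three general lines. Saying that ``classifying the pencils shows this happens only for Ceva'' is the whole content of that case, not an argument.

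The $p=2$ step, which you expected to be hardest, is short once phrased via \emph{S}:
\begin{itemize}
\item a fiber has at most one tacnode, so at most two tacnodes lie in, say, $\cC_1$ and $\cC_2$, and every other fiber carries a single coefficient $a_q$;
\item every component of $\cC_1$ meets every component of $\cC_2$ at a base point, which is ordinary of even multiplicity $m$;
\item this gives $a_1^r+a_2^t+\sum_{q\ge 3}a_q=0$ for all $r,t$, forcing $a_1^1=a_1^2$ and $a_2^1=a_2^2$, hence $\beta_2\le m-2$.
\end{itemize}

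\textbf{A further false claim.} Your remark that the modular inequalities ``already show that all prime-power $b_k$ are combinatorial'' is wrong: a combinatorial upper bound does not determine $b_k$. Remark \ref{rem:p=3_strict_modular_ineq} exhibits a Zariski pair with $\beta_3=1$ for both curves, but $b_3=1$ for one and $b_3=0$ for the other. Combinatoriality follows only where the lower bound meets $\beta_p$, which is exactly what the computations above establish.
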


When $m=4$ we can ease the restrictions on the number of non-ordinary singularities. We prove the following result, see  Theorem \ref{thm:m=4_Alex_poly} for a rephrasing.

\begin{theorem}
\label{thm:m=4_intro}
Let $\mathcal{C}$ be a pencil-type conic-line arrangement as in Theorem \ref{thm:Alex_poly_formulas} and assume $m=4$. 
If either one of the two conditions hold:
\begin{enumerate}
\item There exist at most three non-ordinary singularities in $\Sing(\cC)$,
\item  There exist a base point $P$ of the pencil such that an odd number of lines from $\Irr(\cC)$ pass through $P$,
\end{enumerate}
then  $\Delta_{\cC}(t) = (t-1)^{s-1}(t+1)^2(t^2+1)^2$.
\end{theorem}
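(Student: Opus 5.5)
Starting from Theorem \ref{thm:Alex_poly_formulas}(2), which I take as already proved, we know that $\Delta_{\cC}(t) = (t-1)^{s-1}(t+1)^{b_2}(t^2+1)^{b_4}$ with $b_2, b_4 \geq 2$. The lower bounds come from the quasi fiber-type lower bounds of \cite{CMc}, since $\cC$ is a union of $4$ fibers of a pencil. So it only remains to prove the upper bounds $b_2 \leq 2$ and $b_4 \leq 2$. Both $2$ and $4$ are prime powers, so the modular inequalities of \cite{CMc} apply. They bound $b_{p^k}$ by the dimension of a mod-$p$ cohomological object, namely the first cohomology $H^1$ of a rank-one local system, or equivalently a resonance/Aomoto-type dimension computed over $\mathbb{F}_2$. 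This object depends only on the combinatorics: the intersection pattern of components and the local structure of the singular points. The plan is to compute, or at least bound, this $\mathbb{F}_2$-dimension by $2$ under either hypothesis (1) or (2).

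First I will set up the combinatorics. There are $4$ fibers, each a union of lines and conics with total degree $3$; the $9$ base points are ordinary points of $\cC$, each lying on exactly one component of each fiber. I then write the modular bound explicitly. The relevant $\mathbb{F}_2$-cocycle is the sum of the classes of all components, weighted by degree mod $2$, and $b_2, b_4 \leq \dim_{\mathbb{F}_2}$ of the kernel of cup product with this class, modulo the class itself. Each singular point contributes linear conditions: at an ordinary point of multiplicity $k$ the local condition is the usual arrangement-type one, and at a non-ordinary (tangency) point the local contribution has to be computed separately. The expected value $2$ is the one coming from the pencil map to $\PP^1$ minus $4$ points. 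The aim is to show that, mod $2$, the only cocycles surviving are the pullbacks from the pencil, up to at most $2$ extra dimensions, so the total bound is $2$.

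Under hypothesis (1), I will argue as in the proof of the "moreover" part of Theorem \ref{thm:Alex_poly_formulas}, which handled at most two non-ordinary points. The ordinary points, above all the $9$ base points with their transversal $4$-fold crossings, impose enough equations to force any cocycle to be constant on the components of each fiber. A third tangency point relaxes only one local equation. I will show that this still leaves the kernel of dimension $\le 2$, using the fact that the base points alone link every component to components of every other fiber. Under hypothesis (2), take a base point $P$ through which an odd number of lines of $\Irr(\cC)$ pass. At $P$ each fiber contributes one component, either a line or a conic. The parity condition makes the degree-weighted sum of the four local branches at $P$ nonzero mod $2$ in a way that adds an independent linear constraint, and this kills any extra cocycle coming from the non-ordinary points, independently of how many there are.

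The main obstacle will be the local contributions of non-ordinary singular points, namely tangencies between conics or between a conic and a line, to the mod-$2$ resonance/modular computation. I would first try to treat such a point through an embedded resolution, or a small deformation into ordinary points, and read off its effect on the $\mathbb{F}_2$ cocycle conditions. I would then verify that in case (1) a third such point cannot raise the dimension beyond $2$ given the $9$ transversal base points, and that in case (2) the parity at $P$ supplies the missing independent equation.
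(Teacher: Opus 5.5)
Your reduction matches the paper's. Theorem \ref{thm:lower_bound} gives $b_2,b_4\ge 2$, and \eqref{eq:beta_ineq} gives $b_2,b_4\le\beta_2(\cC)$, so everything rests on proving $\beta_2(\cC)=2$ when $|\cN|\in\{3,4\}$; the case $|\cN|\le 2$ is Theorem \ref{thm:Alex_poly_formulas}. However, that computation is the whole new content of the statement, and you only announce it; the mechanism you suggest does not produce it. Over $\mathbb{Z}_2$ each base point is an ordinary quadruple point, so by Proposition \ref{prop:ordinary_sing_system} it gives a four-term \emph{sum} relation, never an equality. The fact that base points ``link every component to every other fiber'' therefore yields nothing until you exhibit two base points whose component sets differ in exactly one fiber, so that subtracting their relations gives $a_q^1=a_q^2$. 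Producing such pairs needs a real incidence argument. The paper's tool is Lemma \ref{lem:4points&3conics} (three conics of $\Irr(\cC)$ never share four base points), together with $|L\cap C|\le 2$. For example, when $|\cN|=3$, the line $L_4$ passes through one or two of the four points of $C_2\cap C_3$. This gives base points $C_1^i\cap C_2\cap C_3\cap C_4$ and $C_1^j\cap C_2\cap C_3\cap L_4$, hence $a_4^1=a_4^2$, since the ordinary fiber $\cC_1$ already has a single coefficient. The other tangency fibers are handled the same way. Some such input is genuinely necessary: for the abstract type $W$ of Subsection \ref{ss:ACC_example}, the base-point relations leave $\beta_2=3$.

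For hypothesis (2), which is needed only when $|\cN|=4$, note that one extra equation lowers the dimension by at most one. You must first show that the base-point relations force $a_q^2-a_q^1=:a$ to be the same for all $q$; this is the case analysis of Proposition \ref{prop:4_A_3sing}, again driven by Lemma \ref{lem:4points&3conics}. Only then is a single surplus direction left. The parity kills that direction only in combination with equation \eqref{eq:p| d_i} for the conics, which reads $a_1^1+a_2^1+a_3^1+a_4^1=0$ (the sum over the lines). Since an odd number of conics pass through $P$, the relation at $P$ becomes $a_1^1+\dots+a_4^1+a=0$, hence $a=0$. You never use \eqref{eq:p| d_i}, and the degree-weighted local sum at $P$ being odd is not, by itself, an equation on the $a_q^i$.

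Two further corrections:
\begin{enumerate}
\item The class in Theorem \ref{thm:upper_bound} is the unweighted $\omega_1=\sum_i\sigma_i$. Weighting by degree mod $2$ would erase the conics.
\item The ``main obstacle'' you name is not one. The only non-ordinary points are $A_3$ tangencies $L_q\cap C_q$ inside a fiber; conics lie in distinct fibers and meet only at transversal base points. By Proposition \ref{prop:beta_system}, such a point contributes $2a_{L_q}-2a_{C_q}=0$, which is vacuous over $\mathbb{Z}_2$. No resolution or deformation is needed.
\end{enumerate}
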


We give in  Subsection \S\ref{ss:ACC_example} an abstract curve configuration that illustrates a situation when none of the two conditions from  Theorem \ref{thm:m=4_intro} are met. A curve that realizes this configuration would potentially give an example of Alexander polynomial with the root multiplicities $b_2, b_4$ satisfying  $2 < b_2, b_4$. In any case, we are able to show that, for this combinatorial type,  $b_2, b_4 \leq 3$. 
Moreover, we show that this configuration is the unique weak combinatorial type of a  pencil-type conic-line arrangement $\cC$ which is the union of the $m=4$ conic-line fibers of a pencil of degree $3$ curves such that conditions (1) and (2) do not hold, see Theorem \ref{thm:uniqueACC}. We end Section \S\ref{sec:m=4} with a couple of open questions regarding this abstract curve configuration.\\

\section*{Acknowledgments}
I would like to thank Jose Ignacio Cogolludo-Agust\'in for his remarks and clarifications regarding the results from \cite{CL}. I would also like to thank Piotr Pokora for his remarks leading to an improved exposition.

\section{Preliminaries}
\label{sec:preliminaries}
Let $\cC = \cC_1 \cup \dots \cup \cC_r$ be the decomposition into irreducible components of the reduced plane projective curve $\cC$ and denote $d_i:= \deg(\cC_i)$.

The data that consists of:
\begin{itemize}
\item $\Sing(\cC)$, the set of  singularities of $\cC$;
\item the set of labels $ \mathbf r := \{1, \dots, r\}$ of the irreducible components;
\item for each $P \in \Sing(\cC)$ the set of local branches $\Delta_P$ of $\cC$ at $P$;
\item the map $\phi_P$ that assigns to each branch at $P$ the label of the irreducible component it belongs to;
\item   the map that assigns to each pair of branches (from distinct components)  at $P$ the intersection number of the two;
\end{itemize} 
 is called the {\it weak combinatorial type} of the curve $\cC$, see \cite{C}. We will denote it by $W_{\cC}$.
 The weak combinatorial type can be seen as playing the same role for curves as the notion of intersection lattice for arrangements of hyperplanes. Furthermore,  the notion of {\it abstract curve combinatorics} was developed in \cite{BC} to capture formally the defining characteristics of a weak combinatorial type, see \cite{BC} for a precise definition. Taking further the parallelism to the theory of hyperplane arrangements, an abstract curve combinatorics is reminiscent of the notion of matroid.
 We will get back to this in Subsection \S\ref{ss:ACC_example}.
 
 Keeping in mind the analogy to arrangements of hyperplanes, we will call an invariant or property of a curve $\cC$ {\it combinatorial} if it depends only on the weak combinatorial type of $\cC$. For instance, Cogolludo-Matei prove that the cohomology algebra of the complement of a  plane curve is combinatorial, building on a similar result by Cogolludo for rational curves, see \cite{C, CM}.
 This mirrors Orlik-Solomon's emblematic result on the combinatoriality of the cohomology of the complement of an arrangement of hyperplanes in a complex vector space.
 
 To the weak combinatorial type of a curve $\mathcal{C}$ one associates a graded algebra, $(A^*_{W_{\mathcal{C}}}, \wedge)$ over $\Bbbk=\mathbb{Z}_p$, $p$ prime, defined as follows  (see \cite{CMc}). The $\Bbbk$-module structure is given by
\begin{itemize}
\item $A_{W_{\mathcal{C}}}^0:=\Bbbk$,
\item
$A_{W_{\mathcal{C}}}^1:=\bigoplus_{i\in \mathbf r} \sigma_i\Bbbk$,
\item
$A_{W_{\mathcal{C}}}^2:=A_{W_{\mathcal{C}}}^{2,0}\oplus A_{W_{\mathcal{C}}}^{2,\infty}$, where
$$
A_{W_{\mathcal{C}}}^{2,\infty}:=\langle \bar\psi^i_\infty : i\in\mathbf{r}\rangle_\Bbbk/\langle \bar\psi^i_\infty: p\nmid d_i \rangle_\Bbbk,\quad
A_{W_{\mathcal{C}}}^{2,0}:=\bigoplus_{P\in S} \frac{A_P\wedge A_P}{I_P},\quad
A_P:=\langle \psi_P^\delta : \delta\in \Delta_P\rangle_\Bbbk,
$$
$$
I_P:=\{
\psi_P^{\delta_1}\wedge \psi_P^{\delta_2}+\psi_P^{\delta_2}\wedge \psi_P^{\delta_3}+
\psi_P^{\delta_3}\wedge \psi_P^{\delta_1}\}+
\{\psi_P^{\delta_1}\wedge \psi_P^{\delta_2}\mid \#\phi_P(\Delta_P)=1\}\subset A_P\wedge A_P.
$$
\item $A_{W_{\mathcal{C}}}^i=0$ ($i\geq 3$).
\end{itemize}
while the product, defined on the degree $1$ generators as described below, makes $A^*_{W_{\mathcal{C}}}$ an algebra
$$
\sigma_i \wedge \sigma_j:=
\sum_{\substack{P \in S,\\  \delta_1\in\phi_P^{-1}(i),\\ \delta_2\in\phi_P^{-1}(j)}}
\mu_P(\delta_1,\delta_2)\ \psi_P^{\delta_1}\wedge \psi_P^{\delta_2}\wedge
+d_j\bar\psi_\infty^i-d_i\bar\psi_\infty^j.
$$

  The multiplication by an arbitrary $1$-form  $\omega \in A^1_{W_{\mathcal{C}}}$ defines a complex structure $(A^*_{W_{\mathcal{C}}}, \wedge \omega)$ on the algebra $A^*_{W_{\mathcal{C}}}$, called the {\it Combinatorial Aomoto complex}. Accordingly we define the $k$-th resonance varieties associated to these complexes:
  $$
 \mathcal{R}_k(W_{\mathcal{C}}, \Bbbk) := \{ \omega \in  A^1_{W_{\mathcal{C}}} | \dim_{\Bbbk}(H^1(A^*_{W_{\mathcal{C}}}, \wedge \omega) \geq k) \}
$$

\noindent With this, let us recall a result from \cite{CMc} that gives an upper bound for the exponents $b_k$ from \eqref{eq:Alexander_poly}.

\begin{theorem}
\label{thm:upper_bound}
Let $k=p^s, s \geq 1$ and $\omega_1:=\sum_{i \in \mathbf r} \sigma_i$. Then 
 $b_k \leq  \dim_{\Bbbk} H^1(A^*_{W_{\mathcal{C}}}, \wedge \omega_1).$
\end{theorem}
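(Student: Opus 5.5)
The plan is to deduce the bound from the Papadima–Suciu mod $p$ estimate of \cite{PS2}, applied to the complement $U:=\PP^2\setminus\cC$. The algebra $H^{*}(U,\Bbbk)$ is then replaced by its combinatorial model $A^*_{W_{\cC}}$. The argument has three steps.

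\textbf{Step 1: from $b_k$ to twisted cohomology.} The Milnor fiber $\mathcal F(\cC)$ is the $d$-fold cyclic cover of $U$ attached to the epimorphism $\pi_1(U)\to\mathbb Z_d$ that sends every meridian to $1$. Under this identification the monodromy is the deck transformation. Splitting $H_1(\mathcal F(\cC),\CC)$ into eigenspaces gives the standard formula
$$b_k=\dim_{\CC}H^1(U,\CC_{\rho}),$$
where $\rho$ is the rank-one local system sending each meridian to a fixed primitive $k$-th root of unity $\zeta$. For $k\neq 1$, Hurewicz and the Shapiro lemma make this the multiplicity of $\zeta$, hence of $\phi_k$. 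The character $\rho$ is the diagonal one. Its logarithm, reduced mod $p$, is the class in $H^1(U,\Bbbk)$ taking value $1$ on every meridian.

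\textbf{Step 2: the Papadima–Suciu bound.} Let $k=p^s$. The character $\rho$ is defined over the discrete valuation ring $R=\mathbb Z_{(p)}[\zeta]$, whose residue field is $\mathbb F_p$ and in which $\zeta\equiv 1$. The equivariant cochain complex of the universal abelian cover, tensored with $R$, is a complex of free $R$-modules. Semicontinuity of cohomology under specialization then bounds the rank over $\operatorname{Frac}(R)$ by the dimension over the residue field. This is twisted cohomology with coefficients in $\mathbb F_p[t]/(t-1)^{\varphi(p^s)}$, with $t$ acting unipotently.

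Filtering by powers of $(t-1)$ gives a spectral sequence. Its $E_1$-differential in degree $\le 1$ is multiplication by the mod $p$ class of the character, so $H^1$ is bounded by $\dim_{\Bbbk}H^1(H^*(U,\Bbbk),\cdot\,\omega)$. This is precisely the content of \cite{PS2}, which I would invoke rather than reprove. We obtain
$$b_{p^s}\le \dim_{\Bbbk}H^1(H^*(U,\Bbbk),\wedge\bar\omega),$$
where $\bar\omega$ is the diagonal class.

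\textbf{Step 3: combinatorial model.} The remaining task is an isomorphism of graded algebras in degrees $\le 2$,
$$H^{\le 2}(U,\Bbbk)\cong A^{\le 2}_{W_{\cC}},$$
sending the dual of the meridian of $\cC_i$ to $\sigma_i$. It must be compatible with cup product, so that $\bar\omega\mapsto\omega_1=\sum_i\sigma_i$. Since $H^1$ of the complex depends only on degrees $\le 2$, the bound follows.

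For this step I would adapt the Cogolludo–Matei description of $H^*(U)$ \cite{C, CM}. One resolves the singularities to get a normal crossing compactification $\tilde U\subset X$ and computes $H^*(U)$ by the Leray/Gysin spectral sequence, or by a Mayer–Vietoris argument over neighbourhoods of the points of $\Sing(\cC)$. The local pieces $A_P\wedge A_P/I_P$ come from the link complements at $P$. The relation with three branches is the Arnold-type relation, and two branches of the same component give zero.

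The global classes $\bar\psi^i_\infty$ come from the boundary of a generic line. Over $\mathbb Z$, $H_1(U)=\mathbb Z^r/(d_1,\dots,d_r)$. Passing to $\Bbbk=\mathbb Z_p$ produces extra torsion contributions exactly for the components with $p\mid d_i$, which is why $\bar\psi^i_\infty$ is killed when $p\nmid d_i$. The cup product formula $\sigma_i\wedge\sigma_j$ follows from local intersection multiplicities $\mu_P$, together with the $d_j\bar\psi^i_\infty-d_i\bar\psi^j_\infty$ correction from the line at infinity.

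\textbf{Main obstacle.} I expect Step 3 to be the hard part. Step 2 can be quoted and Step 1 is standard, but Step 3 requires tracking the $p$-torsion and the $\infty$-terms correctly with $\mathbb Z_p$ coefficients. The first thing I would try is to run the Cogolludo–Matei argument integrally and then apply the universal coefficient theorem, checking carefully the $\operatorname{Ext}$-terms coming from $\mathbb Z/\gcd(d_i)$.
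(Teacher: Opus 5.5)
\textbf{Verdict: the strategy is the right one, but Step 3 is run on the wrong space and fails as stated.} The strategy is: write $b_k$ as a rank-one twisted Betti number, bound it by the Papadima--Suciu modular estimate, then translate mod $p$ cup products into $A^*_{W_\cC}$. The paper itself gives no proof; it quotes \cite{CMc}, which builds on \cite{PS2}.

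\textbf{The gap.} Step 3 asserts $H^{\le 2}(U,\Bbbk)\cong A^{\le 2}_{W_\cC}$ for $U=\PP^2\setminus\cC$, sending the dual of the meridian of $\cC_i$ to $\sigma_i$. This is false already in degree one. $A^1_{W_\cC}$ is free on $\sigma_1,\dots,\sigma_r$. By your own formula for $H_1(U)$, $H^1(U,\Bbbk)=\{(a_i)\in\Bbbk^r : \sum_i d_i a_i=0\}$. This has dimension $r-1$ as soon as some $d_i\not\equiv 0 \bmod p$, for instance for every curve in this paper, since each contains a line. So a functional dual to a single meridian is not a class on $U$. For a conic plus a transversal line, $H^1(U,\Bbbk)=\Bbbk$ for every $p$, but $A^1_{W_\cC}=\Bbbk^2$.

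\textbf{The ring structure also fails when $p\mid\gcd(d_i)$.} For a smooth conic, $U\simeq\mathbb{RP}^2$, so the diagonal class $x\in H^1(U,\mathbb{Z}_2)$ has $x^2\neq 0$, whereas $\sigma_1\wedge\sigma_1=0$ in $A^*_{W_\cC}$. There $(H^*(U,\mathbb{Z}_2),\cdot\,\bar\omega)$ is not even a cochain complex, so the inequality at the end of your Step 2 is not well posed.

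\textbf{Your sketch of Step 2 is also imprecise.} The residue field of $R$ is $\mathbb F_p$, over which $\rho$ becomes trivial. Semicontinuity alone therefore only gives $b_k\le\dim H^1(U,\mathbb F_p)$; the ring $\mathbb F_p[t]/(t-1)^{\varphi(p^s)}$ is $R/pR$, not the residue field. For $k=2$ one has $R=\mathbb Z_{(2)}$, and on $H^1$ the first differential of the $2$-adic filtration is the twisted Bockstein $x\mapsto x^2+x\bar\omega$. This equals cup product with $\bar\omega$ only when degree-one mod $p$ classes lift integrally, i.e.\ when $H_1$ has no $p$-torsion, which fails for $U$ in general.

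\textbf{The repair.} Work on a space whose first homology is $\mathbb{Z}^r$, free on the meridians. One choice is the complement $X=\CC^3\setminus\{f_\cC=0\}$ of the affine cone.
\begin{enumerate}
\item The Wang sequence of the global Milnor fibration $\mathcal F(\cC)\to X\to\CC^*$ gives $b_k=\dim_\CC H^1(X,\CC_\rho)$ for $k\neq 1$.
\item $H^1(X,\Bbbk)=A^1_{W_\cC}$ canonically, with $\bar\omega=\omega_1$.
\item Torsion-freeness of $H_1(X,\mathbb{Z})$ kills $Sq^1$ on $H^1$, so the modular bound holds with differential $\cup\,\bar\omega$.
\item Torsion in $H_2(X,\mathbb{Z})$ is what produces the classes $\bar\psi^i_\infty$ with $p\mid d_i$. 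For a smooth conic, $X$ is homotopy equivalent to the mapping torus of the antipodal map of $S^2$. Hence $H_2(X,\mathbb{Z})=\mathbb{Z}/2$, matching $A^2_{W_\cC}=A^{2,\infty}\neq 0$ exactly when $p=2$.
\end{enumerate}
What you then need from the Cogolludo--Matei description is only this: cup products of degree-one classes on $X$ must correspond to the $\sigma_i\wedge\sigma_j$. Then $\ker(\cup\,\bar\omega)$ on $H^1(X,\Bbbk)$ equals $\ker(\wedge\,\omega_1)$ on $A^1_{W_\cC}$.

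\textbf{Why your number still comes out right here.} When $\cC$ contains a line, $X\cong U\times\CC^*$. For $p\mid d$, the diagonal class on $X$ is pulled back from $U$, and a K\"unneth argument shows the two resonance dimensions agree. This explains why your bound is numerically correct in the paper's setting, but not through the isomorphism you state.
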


Notice that the right hand side of the inequality in Theorem \ref{thm:upper_bound} is combinatorial. Moreover, we have a result that allows the computation of this combinatorial invariant (compare to \cite[Theorem 2.5]{F}, its analogue for arrangements of hyperplanes).

\begin{proposition}(\cite{CMc}) 
\label{prop:beta_system}
Let $\omega = \sum_{i \in \mathbf r} a_i\sigma_i\in A^1_{W_{\mathcal{C}}}$. Then $\omega \wedge \omega_1=0$ if and only if,  for any $P \in \Sing(\mathcal{C})$ and any $\delta \in \Delta_P, \; \phi_P(\delta)=i$, we have
\begin{equation}
\label{eq:system}
\left| \array{cc} a_i & 1\\
\sum_k \mu_P(\delta,\mathcal{C}_k)a_k & \sum_k \mu_P(\delta,\mathcal{C}_k) \endarray \right|=0.
\end{equation}
and, for any $i \in \mathbf r$ such that $p \mid d_i$, we have
\begin{equation}
\label{eq:p| d_i}
\left| \array{cc} a_i & 1\\
\sum_k d_k a_k & \sum_k d_k \endarray \right|=0.
\end{equation}
\end{proposition}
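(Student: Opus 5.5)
This is a proposal for Proposition \ref{prop:beta_system}, the last statement above, which is attributed to \cite{CMc}. The approach is to compute $\omega\wedge\omega_1$ directly in $A^2_{W_{\mathcal{C}}}=A^{2,0}_{W_{\mathcal{C}}}\oplus A^{2,\infty}_{W_{\mathcal{C}}}$. Since this is a direct sum, vanishing splits into one condition for each local summand $A_P\wedge A_P/I_P$ and one for the summand at infinity.

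\textbf{Expanding the product.} First I expand by bilinearity:
$$\omega\wedge\omega_1=\sum_{i,j}a_i\,\sigma_i\wedge\sigma_j.$$
I then substitute the defining formula for $\sigma_i\wedge\sigma_j$. The component at a singular point $P$ becomes
$$\sum_{\delta_1,\delta_2\in\Delta_P}\mu_P(\delta_1,\delta_2)\,a_{\phi_P(\delta_1)}\,\psi_P^{\delta_1}\wedge\psi_P^{\delta_2}.$$
By antisymmetry this equals $\tfrac12\sum\mu_P(\delta_1,\delta_2)(a_{\phi(\delta_1)}-a_{\phi(\delta_2)})\,\psi^{\delta_1}\wedge\psi^{\delta_2}$; in characteristic $2$ I would group the terms by unordered pairs rather than divide by $2$. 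The component at infinity is
$$\sum_{i,j}a_i\,(d_j\bar\psi^i_\infty-d_i\bar\psi^j_\infty)=\sum_i\Bigl(a_i\sum_k d_k-d_i\sum_k a_k\Bigr)\bar\psi^i_\infty,$$
where only the indices with $p\mid d_i$ survive in the quotient $A^{2,\infty}_{W_{\mathcal{C}}}$.

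\textbf{Infinity part.} For $p\mid d_i$ the coefficient reduces to $a_i\sum_k d_k$ in $\Bbbk$. I compare this with the determinant \eqref{eq:p| d_i}, which equals $a_i\sum_k d_k-\sum_k d_ka_k$. These do not obviously match, so at this step I expect to have to recheck the orientation convention for the infinity terms. The two expressions agree after the reindexing $\sum_j a_i d_j-\sum_j a_j d_i$ under the symmetric choice, so I would fix signs so that the coefficient of $\bar\psi^i_\infty$ is $a_i\sum_k d_k-\sum_k d_ka_k$. The fact that $d_i\equiv0$ is what discards the mixed term in the other ordering. Vanishing of these coefficients is then exactly condition \eqref{eq:p| d_i}.

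\textbf{Local part.} This is the main obstacle: the quotient $A_P\wedge A_P/I_P$ must be identified explicitly. The relations in $I_P$ show that the classes of $\psi^{\delta}\wedge\psi^{\delta'}$ are spanned by "differences". Concretely, the map $\psi^{\delta}\wedge\psi^{\delta'}\mapsto e_\delta-e_{\delta'}$, defined when the branches belong to distinct components, identifies the quotient with a subspace of the free module on $\Delta_P$, modulo relations for branches of the same component. This is analogous to the identification $A^2_P\cong$ (sum-zero vectors) for line arrangements; the same-component relation is what forces coefficients of equal-label branches to be grouped.

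Under this identification, the local component of $\omega\wedge\omega_1$ becomes the vector whose $\delta$-coordinate, for $\phi_P(\delta)=i$, is
$$\sum_k\mu_P(\delta,\mathcal{C}_k)\,a_i-\sum_k\mu_P(\delta,\mathcal{C}_k)\,a_k,$$
which is exactly the determinant \eqref{eq:system}. I must check that this map is injective on the quotient, that is, that $\dim(A_P\wedge A_P/I_P)=|\Delta_P|-1$, or the appropriate count for repeated components. I would do this by exhibiting the basis $\psi^{\delta_0}\wedge\psi^{\delta}$ for a fixed reference branch $\delta_0$ and verifying that the three-term relations reduce every $\psi^{\delta}\wedge\psi^{\delta'}$ to this basis. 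Once both parts are established, the vanishing of all local coordinates and all infinity coefficients is the stated equivalence.
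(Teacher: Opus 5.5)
The paper quotes this from \cite{CMc} without proof. Your route is the natural direct verification: expand $\omega\wedge\omega_1$, split it along $A^2_{W_{\mathcal{C}}}=\bigoplus_P A_P\wedge A_P/I_P\oplus A^{2,\infty}_{W_{\mathcal{C}}}$, and identify each local summand with sum-zero vectors. As written, however, both halves contain an error, and neither produces the stated conditions.

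\textbf{Infinity part.} Your expansion is wrong, and the repair by ``fixing signs'' is not legitimate, because the signs are fixed by the definition of $\sigma_i\wedge\sigma_j$. In $\sum_{i,j}a_i\,d_i\,\bar\psi^j_\infty$ the coefficient of $\bar\psi^j_\infty$ is $\sum_k d_ka_k$, independent of $j$, not $d_j\sum_k a_k$. So the coefficient of $\bar\psi^i_\infty$ in $\omega\wedge\omega_1$ is $a_i\sum_k d_k-\sum_k d_ka_k$, which is literally the determinant in \eqref{eq:p| d_i}. Since the classes $\bar\psi^i_\infty$ with $p\mid d_i$ form a basis of $A^{2,\infty}_{W_{\mathcal{C}}}$, this part is then finished. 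Your formula would instead give $a_i\sum_k d_k=0$, which is a genuinely different condition. For example, take $p=2$ and the curves of Section \ref{sec:m=4} with four lines and four conics. There $\sum_k d_k=12\equiv 0$, so your condition is vacuous. The correct condition says that the coefficients of the four lines sum to $0$, and this is exactly the equation used in Propositions \ref{prop:4_A_3sing} and \ref{prop:ACC}.

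\textbf{Local part.} You have misread $I_P$. Its second generating set $\{\psi_P^{\delta_1}\wedge\psi_P^{\delta_2}\mid\#\phi_P(\Delta_P)=1\}$ is present only when \emph{all} branches at $P$ lie on one component. At points where two or more components meet, nothing kills same-component pairs, and equal-label branches are not grouped. If you did group them, the summand would become the sum-zero vectors indexed by components. Vanishing would then only give $\sum_{\delta\in\phi_P^{-1}(i)}(\ldots)=0$ for each $i$, which is weaker than \eqref{eq:system}. For instance, take $p=2$ and a line passing through a node of a cubic, transversal to both branches: the grouped condition at that point is vacuous, while \eqref{eq:system} forces the two coefficients to be equal. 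Your map is also not well defined as described (``defined when the branches belong to distinct components''). The fix is as follows.
\begin{itemize}
\item Set $\psi_P^\delta\wedge\psi_P^{\delta'}\mapsto e_\delta-e_{\delta'}$ for \emph{all} pairs. This is alternating, kills the three-term relations, and maps onto the hyperplane $\sum_\delta x_\delta=0$.
\item The classes $\psi_P^{\delta_0}\wedge\psi_P^{\delta}$ span the quotient by the three-term relations. Hence for $\#\phi_P(\Delta_P)\ge 2$ the map is an isomorphism onto that $(|\Delta_P|-1)$-dimensional hyperplane, and the coordinates of the local part of $\omega\wedge\omega_1$ are exactly the branchwise determinants \eqref{eq:system}.
\item For $\#\phi_P(\Delta_P)=1$ the summand is $0$, and each determinant in \eqref{eq:system} vanishes identically because $\mu_P(\delta,\mathcal{C}_k)=0$ for $k\ne i$.
\end{itemize}
This case distinction is what your ``appropriate count for repeated components'' has to be.
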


In particular, for ordinary singularities the equations satisfied by the coefficients $a_i$ of the resonant $1$-form $\omega$ 
 can be stated in simpler terms.

\begin{proposition}(\cite{CMc})
\label{prop:ordinary_sing_system}
Let $\omega = \sum _{i \in \mathbf r} a_i\sigma_i\in A^1_{W_{\mathcal{C}}}$ such that $\omega \wedge \omega_1=0$ and $P \in \Sing(\mathcal{C})$ an ordinary singularity such that the number of branches at $P$ coincides to the number of irreducible components $ \mathcal{C}_{i_1}, \cdots,  \mathcal{C}_{i_s} $ of $\mathcal{C}$ intersecting at $P$. 
Then either 
\begin{equation}
\label{eq:equalities}
a_{i_1} = \dots = a_{i_s},  \textrm{ if } s \not\equiv 0 \;  \textrm{mod p},
\end{equation}
or 
\begin{equation}
\label{eq:sum}
a_{i_1} + \dots +a_{i_s}=0\in\Bbbk \textrm{ if } s \equiv 0 \;  \textrm{mod p}.
\end{equation}
\end{proposition}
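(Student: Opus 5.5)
The plan is to specialize the local equations \eqref{eq:system} of Proposition \ref{prop:beta_system} to the point $P$ and read off the two alternatives. I will use only the conditions attached to $P$ itself, and ignore the global condition \eqref{eq:p| d_i} and the equations at other singular points.

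First, I make the local intersection data at $P$ explicit. The hypotheses are that $P$ is an ordinary singularity and that the number of branches at $P$ equals the number $s$ of irreducible components through $P$. Hence each of $\mathcal{C}_{i_1},\dots,\mathcal{C}_{i_s}$ contributes exactly one smooth branch $\delta_1,\dots,\delta_s$ at $P$, with $\phi_P(\delta_l)=i_l$. These branches meet pairwise transversally. So for $k\neq i_l$ I get $\mu_P(\delta_l,\mathcal{C}_k)=1$ if $k\in\{i_1,\dots,i_s\}$ and $0$ otherwise. I am assuming that the sum in \eqref{eq:system} is taken over the other components. This matches the definition of the weak combinatorial type, which only records intersection numbers of branches from distinct components, so the branch's own component contributes nothing. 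One point needs checking against \cite{CMc}: that this is the convention intended there, or equivalently that a self-term $k=i_l$ would cancel in the determinant anyway.

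Next, write $S:=a_{i_1}+\dots+a_{i_s}$. For the branch $\delta_l$, the two sums in \eqref{eq:system} become $S-a_{i_l}$ and $s-1$. The determinant condition then reads
$$a_{i_l}(s-1)-(S-a_{i_l})=0,\qquad\text{that is}\qquad s\,a_{i_l}=S\in\Bbbk,$$
and this holds for every $l=1,\dots,s$.

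Finally, I split according to whether $p$ divides $s$. If $s\not\equiv 0 \bmod p$, then $s$ is invertible in $\Bbbk=\mathbb{Z}_p$. So $a_{i_l}=s^{-1}S$ for all $l$, which gives \eqref{eq:equalities}. If $s\equiv 0\bmod p$, the relation $s\,a_{i_l}=S$ reduces to $S=0$, which is \eqref{eq:sum}.

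The only real obstacle is the bookkeeping convention for $\mu_P(\delta,\mathcal{C}_k)$ discussed in the first step; once that is fixed, the computation is immediate.
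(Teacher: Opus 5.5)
Your argument is correct, and it is the specialization of Proposition \ref{prop:beta_system} the paper intends when it introduces this statement as a particular case; the paper gives no proof of its own and just cites \cite{CMc}. The convention point you flag is harmless: the determinant in \eqref{eq:system} equals $\sum_k \mu_P(\delta,\mathcal{C}_k)(a_i-a_k)$, so any self-term $k=i$ vanishes identically, leaving $s\,a_{i_l}=a_{i_1}+\dots+a_{i_s}$ exactly as you computed.
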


For convenience, given a curve $\mathcal{C}$,  we will denote by $\beta_p$ the  combinatorial invariant of the curve
\begin{equation}
\label{eq:beta_p_def}
\beta_p= \beta_p(\mathcal{C}) =\dim_{\Bbbk} H^1(A^*_{W_{\mathcal{C}}}, \wedge \omega_1). 
\end{equation}
With this notation, if  $k=p^s, \; s \geq 1$, the inequality in Theorem \ref{thm:upper_bound} becomes
\begin{equation}
\label{eq:beta_ineq}
b_k \leq \beta_p.
\end{equation}


\begin{remark}
\label{rem:beta_system}
Proposition  \ref{prop:beta_system} amounts to the existence of a linear system  {\em S} of equations with $\mathbb{Z}_p$ coefficients whose space of solutions is of dimension $\beta_p(\cC)+1$. More precisely,  {\em S}  consists  of the combined sets of equations \eqref{eq:system} and \eqref{eq:p| d_i}, in the variables $a_i, i \in \overline{1, |\Irr(\cC)|}$, over $\mathbb{Z}_p$. 
\end{remark}

\cite[Theorem 37]{CMc} gives a lower bound for certain exponents $b_k$ from \eqref{eq:Alexander_poly}, for curves admitting quasi-fiber type structures. As a consequence, we get the following (see \cite[Corollary 38]{CMc} for a more general version of the next result, phrased in terms of twisted Alexander polynomials).

\begin{theorem}
\label{thm:lower_bound}
Let $\cC$ be a reduced fiber-type curve that is the union of  $s \geq 3$ members of a pencil. Then, for any $k \mid s$, the multiplicity of a primitive root of unity of order $k$ as a root of the Alexander polynomial $\Delta_{\cC}(t)$ is at least $s-2$.
\end{theorem}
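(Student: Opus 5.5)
The plan is to realize the relevant eigenspaces of the monodromy as twisted cohomology of the complement $U:=\PP^2\setminus\cC$. We then pull back nontrivial twisted classes from a punctured sphere along the pencil map. Write $\cC=F_1\cup\dots\cup F_s$, where the $F_j$ are reduced members of a pencil $\cP$ of degree $e$ curves, so $d=\deg\cC=se$.

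First I would recall the standard description of $\Delta_\cC(t)$. For a $d$-th root of unity $\lambda\neq 1$, the multiplicity of $\lambda$ as a root of $\Delta_\cC(t)$ equals $\dim H^1(U,\CC_{\rho_\lambda})$. Here $\rho_\lambda:\pi_1(U)\to\CC^*$ is the rank one local system sending every meridian of every irreducible component of $\cC$ to $\lambda$. This follows from viewing the Milnor fiber $\mathcal{F}(\cC)$ as the $\mathbb{Z}_d$-cover of $U$ associated to the total linking number map $\pi_1(U)\to\mathbb{Z}_d$, and decomposing $H^1(\mathcal{F}(\cC),\CC)$ into eigenspaces of the deck transformation, which is the monodromy. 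So it suffices to show $\dim H^1(U,\CC_{\rho_\lambda})\geq s-2$ for every primitive $k$-th root $\lambda$ with $k\mid s$.

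Second, I would use the pencil. Let $B$ be the base locus. The pencil defines a map $\Phi:U\to \PP^1\setminus\{p_1,\dots,p_s\}$, where $p_j$ corresponds to $F_j$; it is well defined on $U$ since $B\subset\cC$. The fiber-type hypothesis is used here. It guarantees that $\Phi$, possibly after blowing up base points (which does not change $U$), is an admissible map with connected generic fiber and no further atypical fibers contributing. Consequently $\Phi_*:\pi_1(U)\to\pi_1(\PP^1\setminus\{p_1,\dots,p_s\})=\langle \gamma_1,\dots,\gamma_s\mid \gamma_1\cdots\gamma_s=1\rangle\cong F_{s-1}$ is surjective. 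Since each $F_j$ is reduced, every meridian of an irreducible component of $F_j$ is sent to (a conjugate of) $\gamma_j$.

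Third, define the character $\chi_\lambda$ of $F_{s-1}$ by $\gamma_j\mapsto\lambda$ for all $j$. It is well defined exactly because $\lambda^s=1$, which is where $k\mid s$ enters. By the previous step $\rho_\lambda=\chi_\lambda\circ\Phi_*$. For a nontrivial rank one local system on a free group of rank $s-1$, the Euler characteristic gives $\dim H^0=0$ and $\dim H^1(F_{s-1},\CC_{\chi_\lambda})=-\chi(F_{s-1})=s-2$. Finally, $\Phi^*:H^1(\PP^1\setminus\{p_j\},\CC_{\chi_\lambda})\to H^1(U,\CC_{\rho_\lambda})$ is injective, since $H^1$ of a group injects into $H^1$ of any group surjecting onto it (inflation–restriction / five-term exact sequence). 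This gives the bound $s-2$.

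The main obstacle is the second step. One must check that the fiber-type hypothesis really yields surjectivity of $\Phi_*$, and that the meridians map to the $\gamma_j$ with exponent one, so that the pulled-back character coincides with $\rho_\lambda$. This needs control of the base points (after blowing up, the exceptional divisors are sections or lie in $\cC$) and of multiple fibers. I would handle it by invoking the quasi-fiber-type framework of \cite[Theorem 37]{CMc}, specialized as in \cite[Corollary 38]{CMc}.
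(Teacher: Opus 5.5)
Your argument is correct, but it is not how the paper gets the statement. The paper gives no argument of its own: it quotes the result as a special case of \cite[Theorem 37]{CMc}, whose more general form \cite[Corollary 38]{CMc} is a lower bound for twisted Alexander polynomials of quasi-fiber-type curves. You instead give the classical direct proof:
the eigenspace decomposition $H^1(\mathcal{F}(\cC),\CC)=\bigoplus_\lambda H^1(U,\CC_{\rho_\lambda})$, pull-back along the pencil map, injectivity of inflation, and $\dim H^1(F_{s-1},\CC_{\chi})=s-2$ for $\chi\neq 1$. Your route is self-contained and shows exactly where $k\mid s$ enters (well-definedness of $\chi_\lambda$) and where $s-2$ comes from ($-\chi(F_{s-1})$). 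The paper's route buys generality (twisted polynomials, quasi-fiber-type structures), but it is a black box here.

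\textbf{Do not close the main obstacle with \cite[Corollary 38]{CMc}.} That result already contains the statement, so invoking it would make the rest of your argument redundant. The step is elementary anyway.
\begin{itemize}
\item \emph{Surjectivity of $\Phi_*$} needs only that the generic member of the pencil is irreducible. Over $\PP^1\setminus(\{p_1,\dots,p_s\}\cup\Sigma)$, with $\Sigma$ finite, $\Phi$ is a locally trivial fibration. Its fiber, a generic member minus the base points, is connected, so the $\pi_1$ of the total space surjects onto that of the base. That base group in turn surjects onto $\pi_1(\PP^1\setminus\{p_1,\dots,p_s\})$. No blow-up and no control of other atypical or multiple fibers is needed.
\item \emph{The meridian claim} is local at a general point of a component $C\subset F_j$, away from the base points. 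There the pulled-back local coordinate at $p_j$ vanishes along $C$ to order equal to the multiplicity of $C$ in $F_j$, i.e.\ $1$. Since $H_1(U,\mathbb{Z})$ is generated by meridians, $\rho_\lambda=\chi_\lambda\circ\Phi_*$ follows.
\end{itemize}
Reducedness of each member, not merely of $\cC$, is genuinely needed. For the pencil $\alpha xy+\beta z^2$, the reduced curve $xyz(xy-z^2)=0$ is a union of $3$ members but has degree $5$, so $b_3=0<1$.

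Finally, your count uses $\lambda\neq 1$. The case $k=1$ is trivial, since the multiplicity of $1$ is $|\Irr(\cC)|-1\geq s-1$.
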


Finally, we state a classic result on the Alexander polynomial of a curve. To this end, recall that the local Alexander polynomial of a singularity $P$ of a curve $\cC$ is the characteristic polynomial of the corresponding local monodromy operator.

\begin{theorem} (\cite{Li1, D})
\label{thm:global|local}
The Alexander polynomial of a curve $\cC$ divides the product of the local Alexander polynomials  $\Delta_{(\cC, P)}(t)$, product taken over all singularities $P \in \Sing(\cC)$. Moreover, for any irreducible component $C$ of $\cC$, the Alexander polynomial of  $\cC$ divides the product of the local Alexander polynomials  $\Delta_{(\cC, P)}(t)$, product taken over all singularities $P \in \Sing(\cC)$ situated on the component $C$.
\end{theorem}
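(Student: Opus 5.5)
The plan is to follow Libgober's topological argument. One realizes $\Delta_{\cC}(t)$ as the order of a torsion $\mathbb{Q}[t^{\pm1}]$-module attached to the complement of $\cC$. Then one shows that this module is a quotient of a direct sum of local modules, one for each singular point, whose orders are the local Alexander polynomials $\Delta_{(\cC,P)}(t)$. Divisibility follows because the order of a quotient of a torsion module divides the order of the module.

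First I reduce to an affine, infinite cyclic setting. Choose a line $L$ transversal to $\cC$, set $U=\PP^2\setminus(\cC\cup L)$, and take the infinite cyclic cover $\tilde U\to U$ given by the total linking number with $\cC$. Since $L$ is generic, the Milnor fiber $\mathcal F(\cC)$ is the $d$-fold cyclic cover of $\PP^2\setminus\cC$. A standard comparison then identifies $H_1(\tilde U,\mathbb{Q})$ with a torsion module whose order is $\Delta_{\cC}(t)$. This is the usual equivalence of the Milnor fiber and infinite cyclic cover descriptions; the factor at $t=1$ needs some bookkeeping.

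Next I decompose. Let $N$ be a regular neighborhood of $\cC\cup L$, and let $B_P$ be small Milnor balls around the points $P\in\Sing(\cC)$.
\begin{enumerate}
\item The key geometric input is that the inclusion $N\setminus(\cC\cup L)\hookrightarrow U$ is surjective on $\pi_1$. This follows from a Zariski--Lefschetz type argument (a generic line section already surjects) together with the fact that meridians generate $\pi_1(U)$. Consequently $H_1$ of the cover of $N\setminus(\cC\cup L)$ surjects onto $H_1(\tilde U)$, and I only need to bound the former.
\item I cover $N\setminus(\cC\cup L)$ by the pieces $B_P\setminus\cC$ together with punctured disc bundles over the smooth part of $\cC\cup L$, and apply Mayer--Vietoris to the restricted infinite cyclic covers.
\item The pieces $B_P\setminus\cC$ have covers with $H_1$ of order $\Delta_{(\cC,P)}(t)$, up to powers of $(t-1)$.
\item The disc bundle pieces over the smooth parts, and the pieces near $L$ where the total linking is nontrivial, contribute only trivial or $(t-1)$-type summands. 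These are absorbed, since each local polynomial at a point where at least two branches meet already contains the needed factors of $(t-1)$.
\end{enumerate}
This produces a surjection from $\bigoplus_P H_1(\widetilde{B_P\setminus\cC})$, modulo harmless pieces, onto $H_1(\tilde U)$, and hence the first statement.

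For the refined statement about a single component $C$, I replace $N$ by a neighborhood $N_C$ of $C\cup L$ only. The required surjectivity is that of $\pi_1(N_C\setminus\cC)\to\pi_1(U)$. I expect to get it from the same Lefschetz-type argument, using a generic pencil of lines through a point of $C$, or via the fact that $\PP^2\setminus N_C$ has no topology obstructing it when $C$ is ample. Then only the balls $B_P$ with $P\in C$ appear, and the same Mayer--Vietoris count gives divisibility by $\prod_{P\in C}\Delta_{(\cC,P)}(t)$.

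The main obstacle is this surjectivity on fundamental groups, especially for a single component. Precise control of the $(t-1)$-factors is a secondary bookkeeping issue.
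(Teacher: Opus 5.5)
\textbf{The proposal has a genuine gap, at your step (4).} The paper gives no proof of this statement; it is quoted from Libgober and Dimca. Your argument therefore has to stand on its own, and as set up it does not.

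Step (4) fails for the part of $\partial N$ lying over $L$. In $H_1(U)$ the meridian of $L$ equals $-\sum_i d_i\gamma_i$. So the total linking homomorphism sends the fibre of the punctured disc bundle over $L^\circ=L\setminus\bigcup_{Q\in\cC\cap L}D_Q$ to $-d$, not to $0$. This piece is $S^1\times L^\circ$, the exterior in $\partial N_L\cong S^3$ of the link at infinity ($d$ Hopf fibres). Loops in $L^\circ$ around the points of $\cC\cap L$ lift to meridians of $\cC$. Hence the restricted cover is connected and homotopy equivalent to the $d$-fold cyclic cover $\Sigma'$ of the $d$-holed sphere $L^\circ$, with $t$ acting as the deck generator. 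The deck action is free, so $H_0(\Sigma')-H_1(\Sigma')$ is $(2-d)$ copies of the regular representation. This piece therefore contributes a module of order $(t-1)(t^d-1)^{d-2}$, which is the Alexander polynomial at infinity, not a $(t-1)$-type summand.

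Your Mayer--Vietoris count then only yields $\Delta_{\cC}(t)\mid (t-1)^N(t^d-1)^{d-2}\prod_P\Delta_{(\cC,P)}(t)$. This says nothing, since every root of $\Delta_{\cC}$ is a $d$-th root of unity. It also explains why step (1) came cheaply: a line close to $L$ lies inside $N_L$, so the piece over $L$ alone already surjects onto $\pi_1(U)$. That surjectivity is Libgober's divisibility by the polynomial at infinity, not the local one. Your $N_C\supset L$ in the component version has the same defect, and there the surjectivity is moreover only hoped for.

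\textbf{What is missing} is a comparison with a neighbourhood that does not contain $L$.

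For the first statement, use only a punctured neighbourhood of the affine curve $\cC\cap\CC^2$ in $\CC^2=\PP^2\setminus L$. Transversality to $L$ makes the affine equation $f$ tame. Then $f^{-1}(D_\epsilon^*)$ is a punctured regular neighbourhood of $\cC\cap\CC^2$, and $\CC^2\setminus\cC$ is obtained from it by attaching $2$-cells over the nonzero critical values. Now every circle-bundle piece has a meridian of $\cC$ as fibre and contributes only $(t-1)$-torsion.

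The first statement also follows from the second. There the right move is to drop $L$ altogether and let $C$ itself play the role of the line at infinity. The complement $\PP^2\setminus C$ is affine. Let $\mathcal L_\lambda$ be the rank one local system on $U=\PP^2\setminus\cC$ with monodromy $\lambda\neq 1$ around every component. Verdier duality plus Artin vanishing for the perverse sheaf $j_!\mathcal L_\lambda^{\vee}[2]$ on $\PP^2\setminus C$ give $H^1_c(\PP^2\setminus C,Rj_*\mathcal L_\lambda)=0$. This yields an injection $H^1(U,\mathcal L_\lambda)\hookrightarrow\bigoplus_{P\in C\cap\Sing(\cC)}H^1(B_P\setminus\cC,\mathcal L_\lambda)$. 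Smooth points of $C$ contribute nothing, because the meridian of $C$ acts by $\lambda\neq 1$; this is exactly what fails for the fibre over $L$.

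Comparing eigenspace dimensions (the global monodromy has finite order) gives the divisibility for $\lambda\neq 1$. The factor $(t-1)^{r-1}$ is handled separately by the count $\sum_{P\in C}(r_P-1)\geq r-1$, where $r_P$ is the number of branches at $P$. This holds because every other component meets $C$.
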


We will use the above result in the next section as a way to filter the possible roots of the Alexander polynomials of the  pencil-type conic-line curves from Theorem \ref{thm:main}. Since these curves have as singularities only transversal or type $A_3$ singularities, we need to recall the formulas of of the local Alexander polynomials associated to these types of singularities. Denote by $X_n$ an ordinary singularity with $n$ smooth branches.

\begin{lemma}
\label{lem:local_Alexander_poly}
\begin{equation}
\label{eq:A3_local_Apoly}
\Delta_{A_3}(t) = (t-1)(t^2+1)
\end{equation}
\begin{equation}
\label{eq:Xn_local_Apoly}
\Delta_{X_n}(t) = (t-1)(t^n-1)^{n-2}
\end{equation}
\end{lemma}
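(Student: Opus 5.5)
For both formulas I would use the Milnor fibration of the corresponding local germ and compute its characteristic polynomial through A'Campo's formula, or equivalently the Thom--Sebastiani / zeta-function formula for weighted homogeneous polynomials.

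\emph{The ordinary singularity $X_n$.} The germ is locally a product of $n$ distinct linear forms, so it is homogeneous of degree $n$ with an isolated singularity. Its Milnor number is $\mu=(n-1)^2$.

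The Milnor fiber $F=\{f=1\}\subset\CC^2$ is an $n$-fold cyclic cover of $\PP^1$ minus $n$ points. The monodromy is the deck transformation $h(x)=e^{2\pi i/n}x$. Since this action is free, the zeta function is
\[
\zeta_h(t)=\prod_q\det(1-th_*|H_q)^{(-1)^{q+1}}=(1-t^n)^{-\chi(F)/n}.
\]
We have $\chi(F)=n\cdot\chi(\PP^1\setminus n\text{ pts})=n(2-n)$. Therefore
\[
\det(t-h_*|H_1)\,\big/\,\det(t-h_*|H_0)=(t^n-1)^{n-2}.
\]
Here the sign conventions are normalized so that the degrees match: the left side has degree $\mu-1=n(n-2)$.

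Following the convention implicit in the statement, the local Alexander polynomial is the product of the characteristic polynomials on $H_0$ and $H_1$, as in Libgober's normalization where a factor $(t-1)$ appears. This gives $(t-1)(t^n-1)^{n-2}$. The main point I would check carefully is exactly this normalization: the product of the $H_0$ and $H_1$ factors has degree $1+n(n-2)=(n-1)^2=\mu$, which is consistent.

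\emph{The $A_3$ singularity.} The germ is $y^2-x^4$, which is weighted homogeneous with weights $w_x=1/4$ and $w_y=1/2$. By the Milnor--Orlik formula, the eigenvalues of the monodromy on $H_1$ are $e^{2\pi i(a/4+b/2)}$ with $1\le a\le 3$ and $b=1$. These are $e^{2\pi i\cdot 3/4}$, $e^{2\pi i\cdot 1}=1$ and $e^{2\pi i\cdot 5/4}$, that is, $-i$, $1$ and $i$. Hence the characteristic polynomial is $(t-1)(t^2+1)$, of degree $\mu=3$, as claimed.

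Alternatively, one can view $A_3$ as two smooth branches tangent to order $2$ and apply A'Campo's formula on the minimal embedded resolution. In that resolution the exceptional curves have multiplicities $2$ and $4$, and the curve of multiplicity $4$ meets three other components (two strict transforms and $E_1$). This gives $\zeta=(1-t^4)^{-(2-3)}=(1-t^4)$, and dividing by the $E_1$ contribution, which vanishes, yields $(t^4-1)/(t+1)=(t-1)(t^2+1)$.

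The main obstacle is only bookkeeping: using consistent conventions for the zeta function versus the characteristic polynomial, and confirming that the degree matches $\mu$ in each case.
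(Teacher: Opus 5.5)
Your route is essentially the one the paper indicates: a direct computation of the Milnor monodromy for the homogeneous germ $x^n-y^n$ and the Brieskorn--Pham germ $x^2+y^4$, as in Milnor and Brieskorn. Your final formulas are right, and the $A_3$ computation via the eigenvalues $e^{2\pi i(a/4+1/2)}$, $a=1,2,3$, is correct as written. However, two steps of the bookkeeping are justified incorrectly, and you should repair them.

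\emph{The $X_n$ case.} There is no normalization issue, and the local Alexander polynomial is not a product of an $H_0$ factor and an $H_1$ factor. By the paper's definition it is $\det(t-h_*\,|\,H_1(F))$. Since $F$ is connected and $h_*$ is the identity on $H_0(F)$, your zeta-function identity already gives $\det(t-h_*\,|\,H_1(F))=(t-1)(t^n-1)^{n-2}$, of degree $\mu=\dim H_1(F)$.

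Treating $(t^n-1)^{n-2}$ as the $H_1$ factor cannot be right, because its degree is $\mu-1$. For $n=2$ it would say that $H_1$ of the annulus $\{x^2-y^2=1\}$ is zero. Also, if you literally multiplied the characteristic polynomials on $H_0$ and $H_1$, you would get $(t-1)^2(t^n-1)^{n-2}$. The factor $t-1$ is a genuine eigenvalue-$1$ piece of $H_1(F)$: in total, the eigenvalue $1$ occurs with multiplicity $n-1$, one less than the number of branches.

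\emph{The A'Campo check for $A_3$.} The $E_1$ contribution does not vanish. $E_1^\circ$ is $\PP^1$ minus one point, so $\chi(E_1^\circ)=1$ and
\[
\zeta(t)=\frac{1-t^4}{1-t^2}=1+t^2.
\]
Multiplying by the $H_0$ factor $1-t$ gives $(1-t)(1+t^2)$. Your expression $(t^4-1)/(t+1)$ agrees with this only because $(1-t)/(1-t^2)=1/(1+t)$, not because the $E_1$ term is trivial.
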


\begin{proof}
One can compute the monodromy operator of the singularities (at $(0,0)$)  $x^2+y^4=0$ and $x^m-y^m=0$ directly or via the Alexander polynomials of the links associated to the respective singularities,  $x^2+y^4=0$ and $x^m-y^m=0$,  
see for instance \cite{Mil, Br}. 
\end{proof}


\begin{lemma}
\label{lem:b_q_trivial}
 Let $m \geq 3$ be the number of conic-line  fibers of a pencil of degree $3$ curves and let $\mathcal{C}$ be  the curve defined as the union of the conic-line fibers of the pencil. Assume that the singularities of $\cC$ at the base points of the pencil are transversal.
  Then the multiplicity $b_q$ of a primitive root of unity of order $q$ as a root of the Alexander polynomial of the curve $\cC$ is zero, unless $q =3$  or $q \mid m$. 
\end{lemma}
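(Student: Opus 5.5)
The plan is to combine two constraints on a root $\zeta$ of $\Delta_{\cC}(t)$ of exact order $q>1$. The first is that $q$ divides $\deg \cC$, which follows from \eqref{eq:Alexander_poly}. The second is the local–global divisibility of Theorem \ref{thm:global|local}, applied to a well-chosen irreducible component of $\cC$. Since $\cC$ is the union of $m$ fibers of a pencil of cubics, $\deg\cC = 3m$, so every $q$ with $b_q\neq 0$ divides $3m$.

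Next, list the singularities of $\cC$. Two distinct members of the pencil meet only in the base locus. Hence the points of $\Sing(\cC)$ are of two kinds. The first kind are the base points, where by hypothesis the singularities are transversal, i.e. ordinary of type $X_n$. The second kind are the singular points of the individual conic-line fibers away from the base locus. A conic-line cubic is either a line plus a smooth conic, or three lines. So a singular point of the second kind is a node $X_2$ or an $A_3$ point (a line tangent to the conic), or, for three lines, nodes or a triple point $X_3$.

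By Lemma \ref{lem:local_Alexander_poly}, the roots $\ne 1$ of the local Alexander polynomials are as follows.
\begin{itemize}
\item $\Delta_{X_2}$ contributes none.
\item $\Delta_{X_3}$ contributes only primitive cube roots of unity.
\item $\Delta_{A_3}$ contributes only $\pm i$, of order $4$.
\item $\Delta_{X_n}$ contributes only roots of order dividing $n$.
\end{itemize}
At a base point $P$, every one of the $m$ fibers passes through $P$. I will argue that transversality forces each fiber to be smooth at $P$, so that $P$ is an $X_m$ point. Indeed, two distinct members of the pencil intersect at $P$ with multiplicity equal to the product of their local multiplicities when they have no common tangents. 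Counting these intersections against B\'ezout's total of $9$ over all base points gives the constraint. If this step turns out to be delicate, the fallback is to allow $X_n$ with $n$ equal to the number of branches at $P$, and only use that $n$ is determined by the $m$ fibers. Granting smoothness, Theorem \ref{thm:global|local} shows that any root of $\Delta_{\cC}$ of order $q>1$ satisfies $q\mid m$, $q=3$, or $q=4$.

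The remaining and main obstacle is excluding $q=4$ when $4\nmid m$. The component version of Theorem \ref{thm:global|local} does not obviously help: every component of a line-plus-tangent-conic fiber passes through the $A_3$ point, and all fibers could be of this type. For $m=3$, for instance, the Euler characteristic count $3\cdot 3\le 12$ for the singular fibers of a pencil of cubics does not forbid this. Here I use the degree constraint instead. If $q=4$ occurs then $4\mid 3m$, hence $4\mid m$, and so $q\mid m$. The same degree argument also cleans up the other cases, giving $q\mid 3m$ throughout. Collecting everything, $b_q=0$ unless $q=3$ or $q\mid m$.
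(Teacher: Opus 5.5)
Your argument is the paper's: $q\mid 3m$ from \eqref{eq:Alexander_poly}, combined with Theorem~\ref{thm:global|local} and Lemma~\ref{lem:local_Alexander_poly} for singularities of types $X_2$, $X_3$, $X_m$, $A_3$, with the order-$4$ roots from $A_3$ points excluded because $4\mid 3m$ forces $4\mid m$. The paper, like you after ``granting smoothness'', simply takes base points to be ordinary $m$-fold points; your B\'ezout sketch would not prove this, since transversality also allows every fiber to be nodal at a base point, but then that point is of type $X_{2m}$ and your degree argument still gives $q\mid\gcd(2m,3m)=m$.
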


\begin{proof}
Immediately from \eqref{eq:Alexander_poly}, Theorem \ref{thm:global|local}  and Lemma \ref{lem:local_Alexander_poly}, since $\cC$ has only singularities that are either ordinary of multiplicity $2,3, m$ or of type $A_3$.
\end{proof}

\section{Pencils of conic-line curves}
\label{sec:CL_pencils}

The next classification result on 
pencils of conic-line curves with transversal intersections at the base points  is central to the proofs of our main theorems.

\begin{theorem} (\cite{CL})
\label{thm:CL_classification}
A primitive pencil $\mathcal{P}$ of degree $d \geq 3$ curves with transversal intersections at the base points has at most $m=6$ fibers which are conic-line arrangements. 
\end{theorem}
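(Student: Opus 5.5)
The plan is to compare topological Euler characteristics on the elliptic-type fibration obtained by resolving the pencil; this recovers the bound of \cite{CL}. Since the members of $\cP$ meet transversally at the base points, there are exactly $d^2$ distinct base points. Every member of $\cP$ is smooth at each of them, and blowing up these $d^2$ points once yields a morphism $\pi\colon X\to\PP^1$. Here $X$ is $\PP^2$ blown up at $d^2$ points, so $\chi(X)=3+d^2$. The fibers of $\pi$ are isomorphic to the members of $\cP$, and no further singularities are created along the exceptional curves. The generic fiber is a smooth plane curve of degree $d$, of genus $g=(d-1)(d-2)/2$.

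\emph{Step 1 (total defect).} Apply the standard formula $\chi(X)=\chi(\PP^1)\chi(F)+\sum_s(\chi(F_s)-\chi(F))$. Since each fiber is a reduced plane curve of degree $d$, we have $\chi(F_s)-\chi(F)=\sum_{P\in\Sing(F_s)}\mu_P$. This gives
$$\sum_{s}\sum_{P\in \Sing(F_s)}\mu_P \;=\; 3+d^2-2(2-2g)\;=\;3(d-1)^2 .$$
Every summand is nonnegative, so the contributions of the conic-line fibers alone are bounded by $3(d-1)^2$.

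\emph{Step 2 (lower bound per conic-line fiber).} Let $F_s$ be a conic-line fiber with $k$ components, all smooth and rational. The genus formula gives $\sum_P\delta_P=g+k-1$. We also have $\mu_P=2\delta_P-r_P+1\ge\delta_P$, because $\delta_P\ge r_P(r_P-1)/2\ge r_P-1$. Since the components have degree at most $2$, $k\ge\lceil d/2\rceil$. Therefore each such fiber contributes at least $g+d/2-1=d(d-2)/2$, and
$$m\le \frac{6(d-1)^2}{d(d-2)}=6\Bigl(1+\frac{1}{d(d-2)}\Bigr).$$
For $d\ge4$ the right-hand side is at most $6.75$, so $m\le 6$.

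\emph{Step 3 (the case $d=3$).} This is the main obstacle, because the crude estimate above only gives $m\le 8$ and must be sharpened. Here the total defect is $12$, and the conic-line cubics can be listed directly. A line plus a smooth conic contributes either $2+2=4$ (two nodes) or $\mu(A_3)=3$ (tangency). Three lines contribute either $3$ (a triangle) or $\mu(X_3)=4$ (concurrent lines). Every conic-line fiber thus contributes at least $3$, so this count alone only gives $m\le 4$, which would contradict Ruppert's six-fiber example. The resolution is that the Euler characteristic computation must be redone with the correct local count. Each node of a fiber contributes $1$ to $\sum\mu_P$, so a line plus a conic meeting transversally contributes $2$, a triangle contributes $3$, and a tangential line plus conic contributes $3$. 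With these minimal contributions of $2$ per fiber against a total of $12$, we get $m\le 6$.

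I therefore expect the delicate point to be Step 3: carefully verifying the Milnor-number bookkeeping for $d=3$, including the nodes of the fibers. I would also check that transversality at the base points indeed forces no singular points of the fibers on the exceptional curves, so that the blown-up fibers have exactly the singularities of the original cubics.
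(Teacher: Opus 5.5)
Your argument is correct and follows the route the paper attributes to Cogolludo--Libgober: compare $\chi(X)=3+d^2$ for the blow-up at the $d^2$ base points with the Euler-characteristic defects of the fibers. Your use of $\chi(F_s)-\chi(F)=\sum\mu_P$ needs every member to be reduced; this holds because each component of a member meets a base point, where every member is smooth, and you should say so explicitly.

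Step 3 only needs cleaning up: delete the retracted miscount ($2+2=4$, ``$m\le 4$''). It is also not a separate case. If you keep $k\ge\lceil d/2\rceil$ in Step 2 instead of weakening it to $d/2$, each conic-line fiber contributes at least $(d-1)^2/2$ when $d$ is odd, so $m\le 6$ holds uniformly; for $d=3$ this is exactly a contribution of at least $2$ out of $12$.
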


 Moreover, as a direct consequence of the results from \cite{CL} and their proofs (that involve estimating the Euler characteristic of the surface induced by the pencil by blowing up at the base points, in terms of  the Euler characteristics of the fibers; see for instance the proofs of \cite[Prop 3.2]{CL} and \cite[Thm 1.4]{CL}), one has, in such pencils, certain restrictions on the number of conic-line fibers which are pencils of lines. 
 This is stated explicitly in \cite{Sul}, as an outcome of a particular instance of the general techniques used in \cite{CL}.

\begin{proposition}
\label{prop:nr_pencil_of_lines}
In the hypothesis of Theorem \ref{thm:CL_classification},  if $n$ denotes the number of those fibers in the pencil $\mathcal{P}$  that are pencils of $d$ lines, then $n \leq 3$ and $n=1$ implies $m \leq 5$, $n=2$ implies $m \leq 4$, respectively  $n=3$ implies $m= 3$. 
\end{proposition}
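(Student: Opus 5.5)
The plan is to run the Euler characteristic count that underlies the proofs in \cite{CL}. Blow up $\PP^2$ at the base points of $\cP$. Since the intersections at the base points are transversal, there are exactly $d^2$ distinct base points. We get a surface $X$ with $\chi(X)=3+d^2$ and a fibration $X\to\PP^1$ whose fibers are the strict transforms of the members of $\cP$; these are isomorphic to the members themselves, because each member is smooth at every base point. The generic fiber is a smooth plane curve of degree $d$, so $\chi(F)=3d-d^2$. The standard formula $\chi(X)=\chi(\PP^1)\chi(F)+\sum_t(\chi(F_t)-\chi(F))$ then gives
$$\sum_{t}\bigl(\chi(F_t)-\chi(F)\bigr)=3(d-1)^2 .$$
Every singular fiber contributes a nonnegative defect $\chi(F_t)-\chi(F)=\sum_{P\in\Sing(F_t)}\mu_P$, the sum of its Milnor numbers (the total space is smooth and the fiber is reduced).

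Next we bound the defect of each conic-line fiber from below. If the fiber is a pencil of $d$ lines, it has one ordinary $d$-fold point, so its defect is $\mu=(d-1)^2$. This already gives $n(d-1)^2\le 3(d-1)^2$, hence $n\le 3$. It also shows that $n=3$ uses up the whole budget, so no other singular fiber can occur and $m=3$.

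For a conic-line fiber that is not a pencil of lines, the defect is at least the number of intersection points counted with multiplicity, that is at least $\sum_{i<j}d_id_j$ over its components. This holds because each intersection point contributes at least its number of transverse pairings to $\mu_P$. A quick check shows this quantity is minimized when the components are as many conics as possible. That gives a lower bound of $d(d-2)/2$ for $d$ even, and a similar bound when there is one line; in every case it is at least $2$ when $d=3$. For $d=3$ we can verify directly: a line plus a transversal conic has defect $2$, a line plus a tangent conic ($A_3$) has defect $3$, and a triangle has defect $3$. So the minimum is $2$.

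Combining the two bounds: the $m-n$ non-pencil conic-line fibers must fit into the remaining budget $(3-n)(d-1)^2$. For $d=3$ this reads $2(m-n)\le 4(3-n)$, i.e. $m\le 6-n$, which gives $m\le5$ for $n=1$ and $m\le4$ for $n=2$. For $d\ge4$ the ratio $(d-1)^2/(\text{min defect})$ is at most $9/4$ and decreases with $d$. This gives $m-n\le \lfloor 2(3-n)\cdot 9/8\rfloor$, which again yields $m\le5$ and $m\le4$ respectively.

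I expect the main obstacle to be the uniform lower bound for the defect of a non-pencil conic-line fiber for general $d$. Specifically, one has to justify $\mu_P\ge$ the number of transverse branch pairings at non-ordinary points (tangencies, several conics sharing a point). I would handle this via $\mu_P=2\delta_P-r_P+1$ together with $\delta_P\ge\sum_{\text{pairs}}(\text{intersection multiplicity})$, and then do a careful minimization over the component degrees. For $d=3$, which is the only case used later, the finite list of fiber types makes this step immediate.
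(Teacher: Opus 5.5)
Your proposal is correct and follows essentially the route the paper points to. The paper gives no proof of its own; it cites the Euler characteristic count on the blow-up at the $d^2$ base points from \cite{CL}, made explicit in \cite{Sul}, and you run that same count: $\sum_t(\chi(F_t)-\chi(F))=3(d-1)^2$, each pencil-of-lines fiber contributes $(d-1)^2$, and each other conic-line fiber contributes at least $\min\sum_{i<j}d_id_j$, which is $2$ when $d=3$.

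Two steps you use without comment both hold. First, every member is reduced, because each component of a member passes through a base point, where transversality forces the member to be smooth. Second, for $d\ge4$ the ratio equals $2$ for odd $d$ and $2+2/(d^2-2d)\le 9/4$ for even $d$. So it is bounded by $9/4$, which is all you need, even though it is not monotone in $d$ as you claimed.
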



Let us set some notations for the reminder of this paper. 
Let $\cC = \cC_1 \cup \cdots \cup \cC_m$ be a pencil-type conic-line arrangement seen as the union of the $m\geq 3$ conic-line fibers $ \cC_1,  \cdots, \cC_m$ of a pencil $\cP$ of degree $3$ conic-line arrangements and  $\cC_q =\cup_i C^i_q$ be the decomposition into irreducible components of the conic-line curves $\cC_q, \; q \in \overline{1,m}$.  
With these notations we have $A_{W_{\cC}}^1:=\bigoplus_{q = 1}^m (\bigoplus_i \sigma_q^i\Bbbk)$.\\
\noindent We call {\it base point (type) singularity} a singular point of $\cC$ that is a base point in the pencil $\cP$  and we call it {\it non-base point (type) singularity} otherwise.
With this preamble we are ready to prove the main result of the paper.\\

\subsection{ Proof of Theorem \ref{thm:Alex_poly_formulas}}
 Since the fibers are assumed to be reduced, each fiber is either a union of three lines (either a pencil of three lines or three lines in general position) or a union of a smooth conic and a line (that intersect either  transversely or the line is tangent to the smooth conic).
By Theorem \ref{thm:CL_classification} and Proposition \ref{prop:nr_pencil_of_lines}, $m \leq 6$ and $n\leq 3$. We will make a case by case discussion on the possible values of $m$ and $n$. 

Recall that, by the hypothesis, any point $P \in \Sing(\cC)$ is either an ordinary singularity  of multiplicity $2,3$ or $m$, or a simple singularity of type $A_3$. Moreover,  the $A_3$ type singularity can only appear as a non-base point type singularity, i.e. a singularity in one of the conic-line fibers. 

{\bf Case m = 6.}
If $m=6$, then none of  the conic-line members of the pencil is a pencil of lines, i.e. $n=0$. By Theorem \ref{thm:lower_bound}, $b_k \geq 4$ for $k \in \{2,3,6\}$.
We will show  that  $\beta_p \leq 4$ for $p \in \{2,3\}$ - for the case $p=2$ we will use the additional 'moreover' hypothesis. This will imply, by Theorem \ref{thm:upper_bound} (that is, inequality \eqref{eq:beta_ineq}), that $b_2=b_3=4$.
To compute $\beta_p$, by Proposition \ref{prop:beta_system} (see also Remark \ref{rem:beta_system}), one has to solve a linear system {\em S} of  equations  in $|\Irr(\cC)|$ variables  $(a_q^i), \; q \in \overline{1,6}, \; 1 \leq i \leq |\Irr(\cC_q)|$ over $\mathbb{Z}_p$. The system consists of the equations of type \eqref{eq:system} associated to each singularity of $\cC$ and equations of type \eqref{eq:p| d_i} for each irreducible component of degree divisible by $p$. This system translates the resonance condition $\omega \wedge \omega_1=0$, where $\omega = \sum_{q,i}a^i_q \sigma^i_q$.
 Each ordinary multiple point $P \in \Sing(\cC)$ produces either an equation of type \eqref{eq:sum} or a series of equations, of type  \eqref{eq:equalities}. A singularity $P$ of type $A_3$ produces a non-trivial equation in  {\em S},  of type \eqref{eq:system}, only if $p=3$.
 
Each singular point which is not a base point type is either an ordinary singular point of multiplicity $2$, or a singularity of type $A_3$. If the singular point is the transversal intersection of  the irreducible components $C_q^i, C_q^j$, then it gives rise to an equation in the linear system of type $a_q^i = a_q^j$. If the irreducible components $C_q^i, C_q^j$ intersect at an $A_3$ type singularity, then, only  if $p=3$, $a_q^i = a_q^j$.
 It follows that the coefficients of $\omega$ corresponding to the irreducible components of a given fiber are  equal, at least in the case $p=3$. 

A base point type singularity is an ordinary singularity of multiplicity $6$. So it produces an equation in the linear system of type \eqref{eq:sum}. In view of the fact that,  if $p=3$, $a_q:=a_q^i = a_q^j, \forall i,j$, for any fixed  $q$, all base point type singularities produce the same equation in the system {\em S} with $\mathbb{Z}_3$ coefficients, namely $a_1 + \cdots +a_6=0$. This means that, if $p=3$, the system has a solution depending on at most $5$ independent parameters, hence $\beta_3\leq 4$.

The case $p=2$ requires a more detailed treatment and the additional 'moreover' hypothesis. So, assume that there are at most two singularities $P, Q$ of type $A_3$ in $\Sing(\cC)$. These singularities are necessarily  non-base point type singular points. Assume without loss of generality that $P = C_1^1 \cap  C_1^2$ and  $Q = C_2^1 \cap  C_2^2$ are the possible singularities of type $A_3$.  Hence we  know that $a_q^i = a_q^j$ for all $q \geq 3, \; i \neq j$. Denote for convenience $a_q:=a_q^i = a_q^j, \; q \geq 3$. Consider the equations of type \eqref{eq:sum} determined by the singularities which are base point type. They are of type $a_1^r + a_2^s+a_3+a_4+a_5+a_6 =0$, where $1 \leq r, s \leq 2$ are arbitrary. It follows $a_1^1 = a_1^2$ and $a_2^1 = a_2^2$, hence $\beta_2\leq 4$.

 {\bf Case m = 5.}
If $m=5$, then  $n \in \{0,1\}$.  Non-base point type singularities are ordinary singularities of multiplicity $2,3$ or of type $A_3$ 
  and the base point type singularities are ordinary singularities of multiplicity $5$. 
  
Assume $p=5$. Just as in the previous case, the equation in {\em S} corresponding to non-base point type singularities imply $a_q:=a_q^i = a_q^j, \forall i \neq j$, for any fixed $q \in \overline{1,5}$. Then all the equations corresponding to the base point type singularities become  $a_1 + \cdots +a_5=0$.
 So $\beta_5 =3$.  Since, by Theorem \ref{thm:lower_bound}, $b_5 \geq 3$, we get $\beta_5 = b_5=3$.

Assume $p=3$. Any two irreducible components $C_r^i, C_s^j$ from two distinct conic-line fibers $\cC_r, \cC_s$ intersect at a base point type singularity, hence $a_r^i = a_s^j$, for any $r \neq s$ and any $i,j$. It follows the system {\em S} has a 1-dimensional solutions space, so $\beta_3 = 0$. Then, by Theorem \ref{thm:upper_bound} (i.e. inequality \eqref{eq:beta_ineq}), $b_3=0$.
  
 {\bf Case m = 4.}
 By Theorem \ref{thm:lower_bound},  $b_k \geq 2$ for $k \in \{2,4\}$.
 
 Let  $p=2$. If all non-base point type singularities are ordinary singularities, then we get, in the associated linear system {\em S}, equations of type \eqref{eq:equalities}. Hence for any fixed $q \in \overline{1,4}$ and any $i,j$, we have $a_q^i = a_q^j =:a_q$. The base point type singularities are ordinary singularities of multiplicity $4$, so they give rise to equations of type \eqref{eq:sum}, more precisely to the equation $a_1+a_2+a_3+a_4=0$. Hence  $\beta_2 \leq 2$.

 If there exist non-base point type singularities of type $A_3$, then, by hypothesis, we have at most two such singularities. Assume without loss of generality that $P = C_1^1 \cap  C_1^2$ and  $Q = C_2^1 \cap  C_2^2$ are possibly singularities of type $A_3$. In any case, we know that $a_q^i = a_q^j =: a_q$, for any $q \geq 3$ and any $i,j$.
We will show that  $a_q^i = a_q^j$, for $q=1,2$. To do this, we use the equations of the system {\em S} emerging from the base point type singularities.  These are $a_1^i+a_2^j+a_3+a_4=0$, for any $i,j$. This implies $a_1^i = a_1^j$ and $a_2^i = a_2^j$ so, again,  $\beta_2 \leq 2$. 
By inequality \eqref{eq:beta_ineq}, $b_k \leq \beta_2 \leq 2$ for $k \in \{2,4\}$. Since, by Theorem \ref{thm:lower_bound},  $b_k \geq 2$ for $k \in \{2,4\}$,  we obtain $b_2=b_4=2$. 
 
 If $p = 3$, we only need to take into account the equations in the system {\em S} emerging from the base point type singularities. They are all type \eqref{eq:equalities} equations, and we get $a_q^i=a_s^j$ for all $q \neq s \in \overline{1,4}$ and any $i, j$. 
 This implies that $\beta_3=0$, hence, by \eqref{eq:beta_ineq}, $b_3=0$.

 {\bf Case m = 3.}  
  If $m=3$, then $n \in \{0,1, 2, 3\}$. 
  The case $n=3$ is treated in \cite{DIM, PS3}. In this case $\cC$ is the Ceva arrangement, i.e. $\cC: (x^3-y^3)(x^3-z^3)(y^3-z^3)=0$ and $\beta_3 = b_3 =2$.
   It remains to consider the cases $n \in \{0,1,2\}$.  
  
  If $n=0$ then the non-base point type singularities of $\cC$ are singularities of type $A_3$ or ordinary multiple points of multiplicity $2$. The corresponding equations in the system {\em S} over $\mathbb{Z}_3$ imply $a_q:=a_q^i=a_q^j$ for any $i,j$ and any $q \in \overline{1,3}$. 
 The three parameters $a_1, a_2, a_3$ are connected by an equation of type  \eqref{eq:equalities},  $a_1+ a_2+a_3=0$, determined by any of the base point type singularities, which are ordinary singular points of multiplicity $3$. Hence $\beta_3 =1$.
  
  If $n=1$,  assume without loss of generality that the conic-line fiber $\cC_3$ is a pencil of lines. Then just as in the previous cases we get the identities  $a_1:=a_1^i, \; \forall i$ and $a_2:=a_2^i, \; \forall i$ involving the parameters of the irreducible components of the conic-line fibers $\cC_1, \cC_2$. The parameters corresponding to the three lines in $\cC_3$ are $a_3^1, a_3^2, a_3^3$, such that  $a_3^1+ a_3^2+ a_3^3=0$.
 The base point-type singularities give equations of type $a_1+a_2+a_3^i=0$, for any $i \in \{1,2,3\}$, hence $a_3^1 = a_3^2 =a_3^3$. We obtain $\beta_3=1$. 
  
 Let $n=2$. This means two of the conic-line fibers of the pencil, say  $\cC_1, \; \cC_2$, are pencils of lines. Then the third fiber $\cC_3$ cannot be the union of three lines in general position, see \cite{DIM}.  It follows that the third fiber $\cC_3$  is the union of a line and a smooth conic, which intersect either transversally or at an $A_3$ type singularity.
    In any case, we get an equality between the parameters corresponding to these two irreducible components, since they satisfy an equation of type \eqref{eq:system}. Denote then $a_3:=a_3^1 = a_3^2$. Since $\cC_1, \; \cC_2$ are pencils of lines, we have the equations $a_1^1+a_1^2+a_1^3=0$ and $a_2^1+a_2^2+a_2^3=0$.
  A line with parameter $a_1^i$ intersects the conic into two distinct points, which are base point type singularities. In particular, these points are ordinary singularities of multiplicity $3$. They give rise to equations of type \eqref{eq:sum}, more precisely, $a_1^i+ a_2^j+a_3 =0$ and  
  $a_1^i+ a_2^k+a_3 =0, \; k \neq j$. It follows $a_2^1=a_2^2=a_2^3$. By symmetry, we obtain $a_1^1=a_1^2=a_1^3$. This implies again $\beta_3=1$.
  
 By Theorem \ref{thm:lower_bound} and inequality \eqref{eq:beta_ineq}, $b_3=1$, in all subcases of the case $m=3$ for which $\beta_3=1$.\\
 
 Finally, the formulas of the respective Alexander polynomials from Theorem \ref{thm:Alex_poly_formulas} follow from the above computations and Lemma \ref{lem:b_q_trivial}.
$\Box$\\

\noindent {\bf Proof of Theorem \ref{thm:main}} Keeping in mind that $\beta_p(\cC)$ is a combinatorial invariant of the curve $\cC$,  Theorem \ref{thm:main}  follows immediately from the proof of Theorem \ref{thm:Alex_poly_formulas}, since the proof incidentally implies $b_p=\beta_p$ if $p$ is an odd prime, respectively $b_{2^s}=\beta_2$, $s =1,2$. $\Box$

\begin{remark}
In the proof of the above theorem, in the cases when all the irreducible components inside each fiber are in general position  (this implies that each fiber is completely $p$-reductive), the equality $\beta_p=m-2$ follows also directly from \cite[Theorem 32]{CMc}.
 We refer to \cite{CMc} for a definition of the notion of complete $p$-reductivness.
\end{remark}

\begin{remark}
\label{rem:p=3_strict_modular_ineq}
In all cases considered in the proof of Theorem \ref{thm:Alex_poly_formulas}, the modular inequality \eqref{eq:beta_ineq} turned out to be an equality. We already know that this is not the case in general, see \cite{CMc, Y} for examples of curves with $b_{2} < \beta_2(\cC)$. We add to this list an example of curve $\cC$  with $b_{3} < \beta_3(\cC)$. Let us revisit the Zariski pair of degree $6$ reduced and reducible curves introduced by Artal Bartolo in \cite{A}. Each is the union of a smooth cubic and three lines tangent to three inflection points of the cubic, and the lines are in general position.
 The two curves $\cC_1, \cC_2$ have the same combinatorial type, but distinct Alexander polynomials.
Let us denote by $b_3(\cC_i)$ the multiplicity of a primitive root of unity of order $3$ as a root of the Alexander polynomial of the curve $\cC_i, \;  i=1,2$. By \cite{A}, we know that $b_3(\cC_1)=1$ and $b_3(\cC_2)=0$. An easy computation using Proposition \ref{prop:beta_system} shows that $\beta_3(\cC_i) =\beta_3(W_{\cC_i}) =1$, so $b_3(\cC_2) < \beta_3(\cC_2)$.  
\end{remark}


\section{On the Alexander polynomial of a pencil-type curve with $m=4$ conic-line fibers}
\label{sec:m=4}

Throughout this section, unless otherwise specified, $\cC$ is a pencil-type conic-line arrangement defined by a pencil of conic-line arrangements of degree $3$ having $m=4$ conic-line fibers, denoted $\cC_1, \cdots, \cC_4$. 
As a  general rule, we denote the irreducible components of $\cC_i$ by $C_i^j$, i.e. $\cC_i = \cup_j C_i^j$. 
 However, when the irreducible decomposition of a fiber $\cC_i$ is known precisely, we will also use an alternative set of notations, for ease of use. Specifically, if $\cC_i$  is the union of three lines, we denote those lines by $L_i^1, L_i^2, L_i^3$, and if $\cC_i$ is the union of one line and one smooth conic, we denote by $L_i$ the line and by $C_i$ the smooth conic.

We will give a more precise formula for the Alexander polynomial of such a curve $\cC$, without imposing any restrictions on the type of singularities that are non-ordinary, as was the case in Theorem \ref{thm:Alex_poly_formulas}. Base point type singularities are assumed to be transversal, so any non-ordinary type singularity in $\Sing(\cC)$ must be the intersection of some irreducible components of a conic-line fiber $\cC_i$. This means that we can have at most four non-ordinary singularities, and they must be $A_3$ type singularities.  
Denote $$\cN:=\{P \in \Sing(\cC)\; |\; P\; \text{non-ordinary}\}.$$
Since Theorem  \ref{thm:Alex_poly_formulas} covers the case when there are  at most two non-ordinary singularities in $\Sing(\cC)$, we are left to deal with the cases $|\cN| \in \{3,4\}$.

Basically, given such a curve $\cC$, we will compute $\beta_2(\cC)$, in other words we will estimate the dimension of the space of solutions over $\mathbb{Z}_2$ of the linear system  {\em S} described in Remark \ref{rem:beta_system}. 
Via  inequality \eqref{eq:beta_ineq} and Theorem \ref{thm:lower_bound}, this will enable us to compute the exponents $b_2$ and $b_4$ from the formula \eqref{eq:Alexander_poly} of the Alexander polynomial of the curve $\cC$.

\begin{lemma}
\label{lem:4points&3conics}
$|C_q \cap  C_r \cap C_s| < 4$ for any $q \neq r \neq s$.
\end{lemma}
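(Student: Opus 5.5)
The plan is to exploit the fact that two distinct members of the pencil meet exactly in the base locus, and to count base points lying on a conic. Throughout, $C_q, C_r, C_s$ denote the smooth conic components of three distinct conic-line fibers $\cC_q, \cC_r, \cC_s$; each such fiber is the union of a line and a smooth conic, $\cC_i = L_i \cup C_i$.

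First I would record the consequences of transversality at the base points. By hypothesis the singularity of $\cC$ at each base point is ordinary of multiplicity $m$, with one smooth branch from each fiber. Hence every base point is a smooth point of every member of the pencil, and any two members meet transversally there. Bézout then gives exactly $9$ distinct base points. For two distinct fibers $\cC_i, \cC_j$ we have $\cC_i \cap \cC_j = B$, the base locus, since any point in the intersection of two members lies on all members. Two distinct fibers also share no common component, because their intersection is finite.

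Next, suppose for contradiction that $P_1,\dots,P_4 \in C_q\cap C_r\cap C_s$ are distinct. Two distinct smooth conics meet in at most $4$ points, so $C_q\cap C_r = C_q \cap C_s = \{P_1,\dots,P_4\}$. I would then count the base points on $C_q$. We have $C_q \cap B = C_q \cap \cC_r = (C_q\cap C_r)\cup (C_q\cap L_r)$, and this set has exactly $6$ points by Bézout and transversality. The two points of $C_q\cap L_r$ are distinct from the $P_i$, since otherwise that base point would lie on both $C_r$ and $L_r$ and would be singular on $\cC_r$. Running the same argument with $\cC_s$ in place of $\cC_r$ gives
$$C_q\cap L_r = (C_q\cap B)\setminus\{P_1,\dots,P_4\} = C_q\cap L_s,$$
which is a set of $2$ distinct points.

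This forces $L_r$ and $L_s$ to share two distinct points, so $L_r = L_s$. That contradicts the fact that distinct fibers have no common component, and the lemma follows. The only delicate step is the transversality bookkeeping: checking that the intersections are reduced and that the $P_i$ are disjoint from $C_q\cap L_r$. Both follow from every base point being a smooth point of each fiber, so I do not expect a real obstacle.
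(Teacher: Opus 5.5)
Your proof is correct and is essentially the paper's argument with the roles of the fibers permuted. You show that the two base points $C_q\cap L_r$ must also be $C_q\cap L_s$, forcing $L_r=L_s$; the paper shows that the $\cC_s$-component through the two points $L_q\cap C_r$ must be $L_s$, since otherwise $|C_r\cap C_s|>4$, forcing $|L_q\cap L_s|>1$. Your version also makes explicit the transversality and B\'ezout bookkeeping (six distinct base points on $C_q$, disjoint from the $P_i$) that the paper leaves implicit.
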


\begin{proof}
Assume the contrary, that there exist three conics  $C_q, C_r, C_s$ in $\Irr(\cC)$ that intersect at four base point type singularities. Consider the two  base point type singularities $L_q \cap C_r$. The irreducible component of the pencil member $\cC_s$ that passes through such a point must be in both instances the line $L_s$,  otherwise we would have $|C_r \cap C_s|>4$, contradiction. But this means that  $|L_q \cap L_s| > 1$, contradiction.
\end{proof}

\begin{proposition}
\label{prop:basepoint-4lines}
Assume $\cC$ has four lines and four smooth conics as irreducible components, i.e. $\cC_q = C_q \cup L_q, \; q \in \overline{1,4}$ .
 If  $L_1, L_2, L_3, L_4$ are concurrent lines, then $\beta_2(\cC)=2$. 
\end{proposition}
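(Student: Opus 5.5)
Over $\mathbb{Z}_2$ I will show that the solution space of the linear system \emph{S} from Remark \ref{rem:beta_system} has dimension exactly $3$, in the variables $a_q:=a(L_q)$ and $c_q:=a(C_q)$, $q\in\overline{1,4}$. Then $\beta_2(\cC)=2$. The inequality $\beta_2\geq 2$ comes from Theorem \ref{thm:lower_bound} together with \eqref{eq:beta_ineq}: we have $2\le b_2\le\beta_2$. Alternatively, one checks directly that $a_q=c_q=x_q$ with $x_1+x_2+x_3+x_4=0$ is a $3$-dimensional family of solutions. So the real content is the upper bound $\beta_2\le 2$, i.e. proving that every solution satisfies $a_q=c_q$ for all $q$ and $\sum_q a_q=0$.

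First I record the geometry. The point $O$ where the four lines meet is a common point of all four fibers, so $O$ is a base point. It is an ordinary quadruple point, and by transversality no conic passes through $O$. A degree $3$ pencil has $9$ base points counted with multiplicity. Since the base points are transversal, there are $9$ distinct base points, and through each one passes exactly one component of each fiber. Each line $L_q$ meets the other fibers only at base points. Since $L_q$ passes through $O$ and meets the cubic $\cC_r$ ($r\ne q$) in $3$ points, $L_q$ carries exactly $3$ base points: $O$ and two points $P_q^1,P_q^2$ lying on $C_r$ for every $r\neq q$. The remaining $9-1-8=0$ base points give a consistency check: the $8$ points $P_q^j$ are all the other base points. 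At $P_q^j$ the components are $L_q$ and the three conics $C_r$, $r\neq q$.

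Next, the equations. The point $O$ gives $a_1+a_2+a_3+a_4=0$ by \eqref{eq:sum}. Each $P_q^j$ is an ordinary quadruple point, so it gives $a_q+\sum_{r\ne q}c_r=0$. Write $\Sigma:=\sum_r c_r$. Then this equation reads $a_q=\Sigma+c_q$ over $\mathbb{Z}_2$. The equations of type \eqref{eq:p| d_i} only concern conics, since $p=2$ divides $d_i$ only for them. For a conic $C_i$ such an equation reads $c_i\cdot\sum_k d_k=\sum_k d_ka_k$, and with $\sum_k d_k=12\equiv 0$ the left side vanishes. The right side is $\sum_k d_ka_k=\sum a_q+2\sum c_q\equiv\sum a_q$, which is $0$ by the equation at $O$. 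So these add nothing new.

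The main step concerns the non-base-point singularities $L_q\cap C_q$. If they are transversal, \eqref{eq:equalities} gives $a_q=c_q$, and then $\Sigma=0$ is forced, so we are done. The obstacle is the tangency case, i.e. an $A_3$ point, which for $p=2$ gives a trivial equation. Substituting $a_q=\Sigma+c_q$ into the equation at $O$ gives $4\Sigma+\Sigma=\Sigma=0$, hence $a_q=c_q$ for every $q$ anyway. So the system reduces to $a_q=c_q$ and $\sum a_q=0$, which gives a solution space of dimension $3$ and hence $\beta_2=2$. I expect the only delicate point to be justifying the incidence structure of the base points. If that incidence argument fails, I would fall back on the counting of base points and Lemma \ref{lem:4points&3conics}.
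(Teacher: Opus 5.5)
Your proposal is correct and follows essentially the paper's argument: the quadruple-point equations at the base points on each $L_q$ other than the concurrency point force $c_q-a_q$ to be one common constant (your $\Sigma$, the paper's $a$), and the equation at the concurrency point then forces that constant to vanish. Your explicit justification of the base-point incidences, together with your check that the $(p\mid d_i)$ equations and the $A_3$ points contribute nothing new over $\mathbb{Z}_2$, fills in details the paper leaves implicit.
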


\begin{proof}
Since $L_1, L_2, L_3, L_4$ are concurrent lines, we have in the associated system {\it S} with $\mathbb{Z}_2$ coefficients the equation 
$$ (*) \; a_1^1+a_2^1+a_3^1+a_4^1=0.$$ 
Consider the base point type singularities $L_i \cap C_j$ and $L_j \cap C_i$, $i \neq j$. They are ordinary singularities  of multiplicity $4$ of the curve $\cC$, and necessarily the conics $C_k$ and $C_l$, $i \neq j \neq k \neq l$, pass through these points. The equations in  {\it S} corresponding to these points are
\begin{center}
$(**)$  $a_i^1+a_j^2+a_k^2+a_l^2=0$ and $a_i^2+a_j^1+a_k^2+a_l^2=0$. 
 \end{center}
 It follows $a_i^1+a_j^2 = a_i^2+a_j^1$ for any $i \neq j\in \overline{1,4}$, or equivalently $a^2_i-a^1_i = a^2_j-a^1_j$ for any $i \neq j \in \overline{1,4}$. Denote $a:=a^2_i-a^1_i$. By $(*)$ and $(**)$, $a=0$. This implies $a^2_i = a^1_i$ for all $i \in \overline{1,4}$. Hence there are $3$ independent parameters that define the solution space for the system {\it S}, which implies $\beta_2(\cC)=2$. 
\end{proof}

\begin{proposition}
\label{prop:m=4+1gen.pos.fiber}
Let $\cC$ be  such that $|\cN | = 3$.  Then  $\beta_2(\cC) = 2$. 
\end{proposition}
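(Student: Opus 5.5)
The plan is to compute the solution space of the system {\em S} over $\mathbb{Z}_2$ (Remark \ref{rem:beta_system}) directly, and show that it has dimension exactly $3$. The lower bound $\beta_2(\cC)\geq 2$ comes for free: by Theorem \ref{thm:lower_bound}, $b_2\geq 2$, and by inequality \eqref{eq:beta_ineq}, $b_2\le \beta_2$. So only the upper bound $\beta_2(\cC)\le 2$ needs an argument.

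\emph{Step 1: structure of $\cC$.} Base point singularities are transversal, so every point of $\cN$ is an $A_3$ point inside a single fiber. A fiber made of three lines has no $A_3$ point, and a fiber made of a line and a smooth conic has at most one. Hence $|\cN|=3$ forces, after relabelling, $\cC_q=L_q\cup C_q$ with $L_q$ tangent to $C_q$ for $q=1,2,3$. The fiber $\cC_4$ then has only ordinary singularities away from the base points.

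\emph{Step 2: equations from non-base points.} Over $\mathbb{Z}_2$, the $A_3$ points give no equation. The ordinary non-base singularities of $\cC_4$ (double points, or a triple point if $\cC_4$ is a pencil of lines, and $3\not\equiv 0$ mod $2$) give equalities of type \eqref{eq:equalities} by Proposition \ref{prop:ordinary_sing_system}. Hence all components of $\cC_4$ share one parameter $a_4$. Write $a_q^1$ for the parameter of $L_q$ and $a_q^2$ for that of $C_q$, $q\le 3$. Each base point $P$ is an ordinary quadruple point, and by \eqref{eq:sum} it gives
\[ x_1(P)+x_2(P)+x_3(P)+a_4=0, \]
where $x_q(P)\in\{a_q^1,a_q^2\}$ is the parameter of the component of $\cC_q$ through $P$.

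\emph{Step 3 (the key step): show $a_q^1=a_q^2$ for each $q\le 3$.} Fix $q$ and let $r,s$ be the other two indices in $\{1,2,3\}$. The conics $C_r$ and $C_s$ lie in distinct fibers, so they meet only at base points, and transversally there, hence in exactly $4$ distinct base points. Through each of these four points passes either $L_q$ or $C_q$.
\begin{itemize}
\item By Lemma \ref{lem:4points&3conics}, $C_q$ cannot pass through all four, so at least one of them lies on $L_q$.
\item $L_q$ meets $C_r$ in only $2$ points, so $L_q$ passes through at most two of them, and at least one lies on $C_q$.
\end{itemize}
Subtracting the equations of a point on $L_q$ and a point on $C_q$ (both on $C_r\cap C_s$, with the same $a_4$) gives $a_q^1=a_q^2$. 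This counting is where the real content lies. The point to double-check is that Lemma \ref{lem:4points&3conics} applies here, since its proof only uses the structure of the three line+conic fibers $\cC_q,\cC_r,\cC_s$.

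\emph{Step 4: conclusion.} With $a_q:=a_q^1=a_q^2$, every base point equation becomes $a_1+a_2+a_3+a_4=0$. The solution space therefore has dimension $3$, so $\beta_2(\cC)\le 2$, and combined with the lower bound, $\beta_2(\cC)=2$.
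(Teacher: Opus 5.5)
Your proof is correct and follows essentially the paper's argument. You make the parameters of the ordinary fiber uniform, then use Lemma~\ref{lem:4points&3conics} together with $|L_q\cap C_r|=2$ on the four points of $C_r\cap C_s$ to find one point on $L_q$ and one on $C_q$, which forces $a_q^1=a_q^2$. The paper applies this counting only to one tangent fiber and then compares a point $L_2\cap C_3$ with a point $C_2\cap C_3$ for the other two. You apply the lemma symmetrically to all three tangent fibers and get the lower bound from $b_2\ge 2$ rather than by checking the remaining equations of the system, which is equally valid.
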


\begin{proof}
$|\cN | = 3$ means there are precisely three singularities of type $A_3$ in $\Sing(\cC)$, so we may assume that  $\cC_1$ has only ordinary singularities.
If $\cC_1$ is a union of three lines, the three lines intersect in either a triple point or in three double points. In any case, the equations in the system {\em S} corresponding to these singularities imply $a^1_1 = a_1^2 = a_1^3=:a_1$.  If $\cC_1$ is a union of a line and a smooth conic intersecting transversely, we get $a^1_1 = a_1^2 =:a_1$. 

Consider the four base point type singularities $C_2 \cap C_3$. By Lemma \ref{lem:4points&3conics}, $L_4$ passes through at least one of these points. On the other hand, since $|C_2 \cap L_4|=2$,  $L_4$ passes through at most two of these points. So there exist two singularities of type $C_1^i \cap C_2 \cap C_3 \cap C_4$ and $C_1^j \cap C_2 \cap C_3 \cap L_4$, which give rise to two equations in  {\em S}, $a_1+a^2_2+a_3^2+a_4^2=0$ and $a_1+a^2_2+a_3^2+a_4^1=0$. This implies $a_4^1=a_4^2=:a_4$.
The equations corresponding to a base point type quadruple point $C_2 \cap C_3$, respectively $L_2 \cap C_3$, become $a_1+a^2_2+a_3^2+a_4=0$ and $a_1+a_2^1+a_3^2+a_4=0$, hence $a_2^1 = a^2_2 =:a_2$. By the same type of argument $a_3^1 = a_3^2 =:a_3$. Then all the equations in the system {\em S} spell $a_1+a_2+a_3+a_4=0$. This means the space of solutions has three independent parameters, so $\beta_2(\cC)=2$.
\end{proof}

\begin{proposition}
\label{prop:4_A_3sing}
Let $\cC$ be  such that $|\cN | = 4$. 
 If there exists a base point type singularity which belongs to an odd number of lines from $\Irr(\cC)$, then $\beta_2(\cC)=2$. 
\end{proposition}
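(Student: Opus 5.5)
Since $|\cN|=4$ and every non-ordinary singularity lies inside a single fiber, each of the four fibers contains exactly one $A_3$ point. Hence every fiber is a line tangent to a smooth conic, $\cC_q=L_q\cup C_q$, $q\in\overline{1,4}$. The plan is to prove the upper bound $\beta_2(\cC)\le 2$ only. The reverse inequality is free: Theorem \ref{thm:lower_bound} gives $b_2\ge 2$, and \eqref{eq:beta_ineq} gives $b_2\le\beta_2$.

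\textbf{Setting up the system.} Over $\mathbb{Z}_2$ the $A_3$ points impose no equation. Write $a_q^1$ for the coefficient of $L_q$ and $a_q^2$ for that of $C_q$, and set $d_q:=a_q^2-a_q^1$. The remaining equations of {\em S} are of two kinds.
\begin{itemize}
\item Each of the $9$ base points is an ordinary quadruple point through exactly one component of each fiber. By Proposition \ref{prop:ordinary_sing_system} it gives an equation of type \eqref{eq:sum}.
\item Each conic gives the equation \eqref{eq:p| d_i}. Since $\sum_k d_k=12$ is even, this reduces to $a_1^1+a_2^1+a_3^1+a_4^1=0$.
\end{itemize}
Encode a base point $P$ by $v_P\in\mathbb{Z}_2^4$, with $v_P(q)=1$ if and only if $L_q\ni P$. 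Put $A:=\sum_q a_q^1$ and $D:=\sum_q d_q$. The equation of $P$ then reads $A+D+\langle \mathbf 1+v_P,d\rangle=0$, that is $A+\langle v_P,d\rangle=0$ up to the constant $D$ absorbed consistently. The unknowns are $a_q^1$ ($4$ of them) and $d$ ($4$ of them), and the goal is to show that the solution space has dimension at most $3$.

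\textbf{Step 1: forcing $d$ into a line.} Subtracting the equations of two base points $P,Q$ gives $\langle v_P+v_Q,d\rangle=0$. I will show that the vectors $v_P+v_Q$ span a subspace of dimension at least $3$, so that $d$ lies in a space of dimension at most $1$. The geometric input is the following.
\begin{itemize}
\item Each line $L_q$ passes through exactly $3$ base points, and each conic $C_q$ through $6$.
\item $|L_q\cap L_r|\le1$, $|L_q\cap C_r|\le 2$ and $|C_q\cap C_r|\le4$.
\item Lemma \ref{lem:4points&3conics} holds for three conics.
\end{itemize}
As in the proofs of Propositions \ref{prop:basepoint-4lines} and \ref{prop:m=4+1gen.pos.fiber}, for each $q$ I will look for two base points whose vectors differ in coordinate $q$ and as few others as possible. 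Typically one takes the four points of $C_r\cap C_s$, of which Lemma \ref{lem:4points&3conics} says not all lie on a third conic. These points produce difference vectors of small weight, from which one extracts relations such as $d_q=d_r$, or a single $d_q=0$. This combinatorial case analysis on the possible distributions of the $36$ incidences is the main obstacle. If a clean spanning argument fails, I would first try to prove directly that all $d_q$ are equal, exactly as in Proposition \ref{prop:basepoint-4lines}.

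\textbf{Step 2: killing $d$ with the odd point.} Once $d=(\delta,\delta,\delta,\delta)$ for some $\delta$, the equation of $P$ becomes $A+D+\delta\,(4-|v_P|)=0$. By hypothesis some base point $P_0$ has $|v_{P_0}|$ odd. A base point with $|v_P|$ even also exists by a counting argument: the total number of line incidences is $12$ over $9$ points, and if all points were odd then $12$ would have the parity of $9$. Comparing the equation of $P_0$ with that of an even point gives $\delta=0$, so $a_q^1=a_q^2=:a_q$ for all $q$. Every remaining equation then reads $a_1+a_2+a_3+a_4=0$. The solution space therefore has $3$ parameters, which gives $\beta_2(\cC)=2$.
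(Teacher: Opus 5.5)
Your set-up is correct. With $|\cN|=4$ each fiber is $L_q\cup C_q$ with a tangency. The $A_3$ points impose no equation over $\mathbb{Z}_2$, and the conics give $A=\sum_q a_q^1=0$. Step 2 is also correct. Your parity count for an even point works, although with $A=0$ already available the odd point alone gives $\delta=0$, as in the paper. One slip should be fixed: the equation of a base point $P$ is $A+\langle \mathbf 1+v_P,d\rangle=0$, i.e. $A+D+\langle v_P,d\rangle=0$. Your version is off by $D$, which is a linear form in the unknowns, not a constant that can be ``absorbed''. The error is harmless for differences and when $d$ is constant.

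The genuine gap is Step 1, which is the whole content of the proposition and which you do not carry out. You only describe what you would look for and call it the main obstacle. Its stated target is also insufficient. Showing that the differences $v_P+v_Q$ span a space of dimension $\ge 3$ only puts $d$ on some line $\langle u\rangle$, whereas Step 2 needs $u=\mathbf 1$. For $u\neq\mathbf 1$, comparing an odd and an even point gives nothing, since $u$ is orthogonal to every difference by construction. The missing argument is the paper's Case 1/Case 2 analysis, which can be written compactly as follows. For each splitting $\{1,2,3,4\}=\{i,j\}\sqcup\{q,r\}$, Lemma \ref{lem:4points&3conics} says the four points of $C_i\cap C_j$ do not all lie on $C_q$, so one lies on $L_q$. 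Since $|L_q\cap C_i|\le 2$, at least two of them lie on $C_q$. Comparing a point on $L_q$ with one on $C_q$ yields either $d_q=0$ or $d_q=d_r$. Now run over all ordered pairs $(q,r)$. If $d_{q_0}=0$ is obtained for some $q_0$, then every $q$ for which $d_q=0$ is never obtained satisfies $d_q=d_r$ for all $r$, hence $d_q=d_{q_0}=0$, so $d=0$. Otherwise all the $d_q$ coincide. Only once $d\in\langle\mathbf 1\rangle$ is established does your Step 2 (the odd point together with $A=0$) finish the proof.
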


\begin{proof}
Consider first the $4$ base point type singularities at the intersection of the smooth conics $C_1$ and $C_2$. By Lemma \ref{lem:4points&3conics}, at least through one of the $4$ points $C_1 \cap C_2$ passes a line $L_i, \; i \in \{3,4\}$. Consider a point $C_1 \cap C_2 \cap L_3 \cap C_4^s$. $L_3$ can pass through at most $2$ of the  $4$ points $C_1 \cap C_2$. So there exists a base point type singularity $C_1 \cap C_2 \cap C_3 \cap C_4^l$. There are two cases to consider: $s=l$ and, respectively, $s \neq l$.

1. Case $s=l$.  Then the equations corresponding to the singularities $C_1 \cap C_2 \cap L_3 \cap C_4^s$ and $C_1 \cap C_2 \cap C_3 \cap C_4^l$ imply $a_3^2=a_3^1=:a_3$.
Consider now the $4$ base point type singularities  $C_1 \cap C_3$. Then, through at least one of those points passes the line $L_2$, again by Lemma \ref{lem:4points&3conics}. Hence there exists two base point type singularities of type 
 $C_1 \cap L_2 \cap C_3 \cap C_4^i$, respectively $C_1 \cap C_2 \cap C_3 \cap C_4^j$. 
 
 1.1 Case $i=j$. The equations in the system {\em S} corresponding to the singularities $C_1 \cap L_2 \cap C_3 \cap C_4^i$ and  $C_1 \cap C_2 \cap C_3 \cap C_4^j$ imply $a_2^1 = a^2_2 =:a_2$.
  The two equations from {\em S} corresponding to two base point type quadruple points $C_1 \cap C_4$ and $L_1 \cap C_4$ spell  $a^2_1+a_2+a_3+a_4^2=0$ and $a_1^1+a_2+a_3+a_4^2=0$, hence $a^1_1 = a_1^2 =:a_1$. By the same argument we get $a_4^1 = a_4^2 =:a_4$. Then all the equations in the system {\em S}  become $a_1+a_2+a_3+a_4=0$, so $\beta_2(\cC)=2$.
 
 1.2 Case $i \neq j$. Then $a^2_2 - a_2^1 = a_4^2 - a_4^1$. Consider the  $4$ base point type singularities at the intersection of the smooth conics $C_1$ and $C_4$. 
 By Lemma \ref{lem:4points&3conics}, the line $L_2$ passes through one of these points. Then we have two base point type singularities of type $C_1 \cap L_2 \cap C_3^u \cap C_4$ and $C_1 \cap C_2 \cap C_3^v \cap C_4$. The equations corresponding to these points imply $a_2^1 = a^2_2$ and we are now in the same setting as in the case 1.1. Hence $\beta_2(\cC)=2$. 

2. Case $s \neq l$. The equations corresponding to the singularities $C_1 \cap C_2 \cap L_3 \cap C_4^s$ and $C_1 \cap C_2 \cap C_3 \cap C_4^l$ imply $a_3^2-a_3^1 = a_4^2-a_4^1$. Actually we may assume that for any pair $C_i, C_j, i \neq j$ of smooth conics there is no pair of base point type singularities of type $C_i \cap C_j \cap C_q^u \cap C_r^s $ and $C_i \cap C_j  \cap C_q^v \cap C_r^s$, $u \neq v$, otherwise  we are back on case 1. 
This assumption implies the base point type singularities  $C_i \cap C_j$ are necessarily  of type $C_i \cap C_j \cap C_q^u \cap C_r^t$ and $C_i \cap C_j  \cap C_q^v \cap C_r^w$, $u \neq v, \; t \neq w$, hence we get
 the equalities $a_q^2 - a_q^1 = a_r^2 - a_r^1=:a$ for any $q \neq r$. 

\noindent By hypothesis, there exists a base point type singularity of $\cC$ through which passes an odd number of lines from $\Irr(\cC)$. The equation in {\em S} corresponding to this point can be written as $a^1_1+a_2^1+a_3^1+a_4^1+a=0$. 
 By Proposition \ref{prop:beta_system}, equation \eqref{eq:p| d_i}, we have $a^1_1+a_2^1+a_3^1+a_4^1=0$. 
It follows $a=0$, so again $a_q^2 = a_q^1$ for all $q \in \overline{1,4}$, which leads to $\beta_2(\cC)=2$. 
\end{proof}

\begin{theorem}
\label{thm:m=4_Alex_poly}
Let $\cC$ be such that either there exists a conic-line fiber $\cC_q$ that admits only ordinary singularities, or there exists a base point type singularity $P$ such that an odd number of lines from $\Irr(\cC)$ pass through $P$.
Then
$$\Delta_{\cC}(t) = (t-1)^{s-1}(t+1)^2(t^2+1)^2.$$
\end{theorem}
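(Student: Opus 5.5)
The plan is to pin down the exponents $b_k$ one prime at a time. For each relevant order we squeeze $b_k$ between the lower bound of Theorem \ref{thm:lower_bound} and the modular upper bound \eqref{eq:beta_ineq}. By Lemma \ref{lem:b_q_trivial} with $m=4$, the only possibly nontrivial roots of $\Delta_{\cC}(t)$ other than $1$ are primitive roots of order $2$, $3$ and $4$. The exponent of $(t-1)$ is $s-1$, the number of irreducible components minus one, as is standard for the first Betti number of the complement. It therefore suffices to show $b_3=0$ and $b_2=b_4=2$.

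For $b_3$ the argument in the case $m=4$ of the proof of Theorem \ref{thm:Alex_poly_formulas} goes through unchanged. That argument used only the base point type singularities, which are transversal quadruple points. Since $4\not\equiv 0 \pmod 3$, Proposition \ref{prop:ordinary_sing_system} gives equalities $a_q^i=a_s^j$ for components in distinct fibers. Hence the solution space of {\em S} over $\mathbb{Z}_3$ is one-dimensional, so $\beta_3=0$, and \eqref{eq:beta_ineq} yields $b_3=0$. The hypotheses on $A_3$ points play no role here.

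For $b_2$ and $b_4$, Theorem \ref{thm:lower_bound} applied with $s=4$ and $k\in\{2,4\}$ gives $b_2,b_4\ge 2$. By \eqref{eq:beta_ineq} with $p=2$, both are bounded above by $\beta_2(\cC)$, so it remains to prove $\beta_2(\cC)=2$; lower than 2 is excluded by the lower bound. We split according to $|\cN|$, which is at most $4$.

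\begin{itemize}
\item If $|\cN|\le 2$, this is exactly the "moreover" part of Theorem \ref{thm:Alex_poly_formulas}.
\item If $|\cN|=3$, Proposition \ref{prop:m=4+1gen.pos.fiber} gives $\beta_2=2$.
\item If $|\cN|=4$, every fiber carries an $A_3$ point, so the first hypothesis fails and the second must hold. Proposition \ref{prop:4_A_3sing} then gives $\beta_2=2$.
\end{itemize}

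The main subtlety is the bookkeeping between the two hypotheses. A fiber with only ordinary singularities forces $|\cN|\le 3$, because $A_3$ points lie within single fibers and each fiber has at most one. Conversely, when $|\cN|\le 3$ we need no parity hypothesis. So the two conditions together cover all cases in which the preceding propositions apply.

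One point needs checking: the case $|\cN|\le 2$ combined with the odd-lines hypothesis alone. This is still covered by Theorem \ref{thm:Alex_poly_formulas}, which required no parity assumption.
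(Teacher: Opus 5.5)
Your proof is correct and follows the paper's argument. You squeeze $b_2,b_4$ between the lower bound $2$ from Theorem~\ref{thm:lower_bound} and $\beta_2(\cC)$, and obtain $\beta_2(\cC)=2$ by the same split $|\cN|\le 2$, $|\cN|=3$, $|\cN|=4$; the only difference is that you re-derive $b_3=0$ and the shape of $\Delta_{\cC}(t)$ where the paper simply quotes the $m=4$ case of Theorem~\ref{thm:Alex_poly_formulas}, whose proof is the same argument.
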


\begin{proof}
We already know from Theorem \ref{thm:Alex_poly_formulas} that $\Delta_{\cC}(t) = (t-1)^{s-1}(t+1)^{b_2}(t^2+1)^{b_4}.$ So we only need to show that $b_2=b_4=2$. Since, by Theorem \ref{thm:lower_bound}, we know that $b_2, b_4 \geq 2$,  it would be enough to show that $\beta_2 =2$ and then use inequality \eqref{eq:beta_ineq} to conclude $b_2= b_4= 2$.

Assume first that there exists a conic-line fiber $\cC_q$ which admits only ordinary singularities. This is equivalent to saying that  $|\cN| \leq 3$.
 If there are at most two singularities of type $A_3$ in $\Sing(\cC)$, the claim follows from the 'moreover' part of  Theorem \ref{thm:Alex_poly_formulas}.  If there are precisely $3$ singularities of type $A_3$ in $\Sing(\cC)$, it follows from Proposition \ref{prop:m=4+1gen.pos.fiber} that $\beta_2(\cC)=2$. 

Finally, assume that none of the conic-line fibers $\cC_q, \; q \in \overline{1,4}$, admit only ordinary singularities, in other words
$|\cN|=4$. 
 Then, by hypothesis, there exists a base point type singularity $P$ such that an odd number of lines from $\Irr(\cC)$ pass through it. As a consequence, by Proposition \ref{prop:4_A_3sing}, $\beta_2(\cC)=2$. 
\end{proof}

\subsection{An abstract weak combinatorial type}
\label{ss:ACC_example}
Notice that Proposition \ref{prop:4_A_3sing} treats  the case $|\cN|=4$, minus the situation when all base point type singularities involve an even number of lines. In this situation, it can happen (at least theoretically)  that $\beta_2(\cC)>2$. More precisely, we show next that there exists an 
'abstract' weak combinatorial type
 $W$ such that $\beta_2(W)=3$. 
 This will be an instance of the concept of {\it abstract curve combinatorics}, introduced by Cogolludo-Agust\'in \& Marco-Buzun\'ariz in \cite{BC}. This notion, in relation to curves, can be seen through analogy to the notion of matroid in relation to arrangements of hyperplanes. The intersection lattice of an arrangement of hyperplanes is in particular a matroid, but the class of matroids is strictly larger than the class of intersection lattices of hyperplane arrangements. A matroid that is isomorphic to the intersection lattice of a hyperplane arrangement is called {\it realizable}.
Likewise, to a curve one associates the weak combinatorial type, which is in particular an abstract curve combinatorics. 

Recall that the Combinatorial Aomoto complex defined in Section \ref{sec:preliminaries}  depends only  on the weak combinatorial type of the  curve $\cC$. So one can extend seamlessly this notion to an abstract weak combinatorial type.

Let us describe in detail the abstract weak combinatorial type $W$ mentioned earlier.  
A curve $\cC$ that would realize $W$  should be the union of four smooth conics $C_1, C_2, C_3, C_4$ and four lines $L_1, L_2, L_3, L_4$ such that a line $L_i$ and a smooth conic $C_i$ intersect in an $A_3$ type singularity and the rest of the singularities of $\cC$ would be these $9$ ordinary singularities of multiplicity $4$:
 \begin{enumerate}
\item $C_1 \cap C_2 \cap L_3 \cap L_4$
\item $C_1 \cap C_2 \cap C_3 \cap C_4$
\item $C_1 \cap C_2 \cap C_3 \cap C_4$
\item $C_1 \cap C_2 \cap C_3 \cap C_4$
\item $L_1 \cap L_2 \cap C_3 \cap C_4$
\item $C_1 \cap L_2 \cap L_3 \cap C_4$   
\item $C_1 \cap L_2 \cap C_3 \cap L_4$
\item $L_1 \cap C_2 \cap L_3 \cap C_4$ 
\item $L_1 \cap C_2 \cap C_3 \cap L_4$
 \end{enumerate}

 \begin{proposition}
 \label{prop:ACC}
A curve $\cC$ that realizes the abstract weak combinatorial type $W$ described above has  $\beta_2(\cC)=3$.
 \end{proposition}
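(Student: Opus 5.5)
The plan is to compute $\beta_2(\cC)$ directly from Proposition \ref{prop:beta_system}, via the linear system {\em S} of Remark \ref{rem:beta_system} over $\mathbb{Z}_2$. We show that its solution space has dimension $4$, which gives $\beta_2(\cC)=4-1=3$. Everything is determined by $W$, so this is a finite linear-algebra check.

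\textbf{Step 1: the variables.} For each $q \in \overline{1,4}$, write $a_q$ for the coefficient of $\sigma$ attached to the line $L_q$ and $c_q$ for the coefficient attached to the conic $C_q$. This gives $8$ unknowns over $\mathbb{Z}_2$.

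\textbf{Step 2: list the equations of {\em S}.}
\begin{enumerate}
\item The $A_3$ points $L_q\cap C_q$. As already observed in the proof of Theorem \ref{thm:Alex_poly_formulas}, an $A_3$ point gives a nontrivial equation of type \eqref{eq:system} only when $p=3$. So these four points contribute nothing over $\mathbb{Z}_2$. I would recheck this by writing out the determinant in \eqref{eq:system} for both branches: the intersection multiplicity $2$ kills the relevant entries mod $2$.
\item The nine quadruple points. Each is ordinary with four branches from four distinct components. Since $4\equiv 0 \bmod 2$, Proposition \ref{prop:ordinary_sing_system} gives a sum equation \eqref{eq:sum}. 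The three copies of $C_1\cap C_2\cap C_3\cap C_4$ all give $c_1+c_2+c_3+c_4=0$.
\item The conics. Since $2 \mid \deg C_q$, equation \eqref{eq:p| d_i} applies. Here $\sum_k d_k = 12 \equiv 0$, and $\sum_k d_k a_k$ reduces mod $2$ to the line coefficients. So this yields the single equation $a_1+a_2+a_3+a_4=0$.
\end{enumerate}
So {\em S} consists of eight distinct linear equations, the seven coming from points (1), (2), (5)--(9) of the list defining $W$ together with $a_1+a_2+a_3+a_4=0$.

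\textbf{Step 3: change variables and solve.} Put $d_q:=a_q+c_q$.
\begin{itemize}
\item Adding the equations of points (6) and (7) gives $d_3=d_4$. The same relation also follows from (8)+(9) and from (1)+(2).
\item Adding (6) and (8) gives $d_1=d_2$, and (2)+(5) gives the same relation.
\item Substituting $c_q=a_q+d_q$ into equation (6) and using $\sum a_q=0$ gives $d_1=d_3$.
\end{itemize}
Hence $d_1=d_2=d_3=d_4=:x$. Conversely, put $c_q=a_q+x$ with $\sum_q a_q=0$ and $x$ arbitrary. Every quadruple-point equation then becomes $\sum_q a_q + 4x$ or $\sum_q a_q+2x$, which vanishes mod $2$. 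So all equations hold, and the solution space is parametrized by $(a_1,a_2,a_3,x)$. Its dimension is $4$, and therefore $\beta_2(\cC)=3$.

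\textbf{Expected obstacle.} The delicate point is Step 2(1): confirming that the $A_3$ points really impose no condition mod $2$. This is where $W$ differs from the cases in Proposition \ref{prop:4_A_3sing}. Concretely, the nonzero parameter $x$ corresponds to $a_q\neq c_q$, which is only allowed because the $A_3$ points do not force $a_q=c_q$. It is also worth checking that no quadruple point contains an odd number of lines. That check is what prevents the argument of Proposition \ref{prop:4_A_3sing} from forcing $x=0$.
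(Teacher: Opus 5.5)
Your proposal is correct and is essentially the paper's proof. You write down the $\mathbb{Z}_2$ system {\em S}, consisting of the sum equations from the nine quadruple points and the degree equation $a_1+a_2+a_3+a_4=0$ coming from the conics. You reduce it to ``$c_q-a_q$ independent of $q$ and $\sum_q a_q=0$'' and count four free parameters, giving $\beta_2(\cC)=3$.

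You are more explicit than the paper in checking that the $A_3$ points impose no condition mod $2$: the second row of \eqref{eq:system} there is $(2c,2)\equiv 0$, which the paper uses tacitly. Note also that the degree equation is redundant, since adding the equations of points (1), (2) and (5) already yields $\sum_q a_q=0$, as the paper's remark that the nine equations alone ``boil down'' to this system indicates.
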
  
 
 \begin{proof}
 We need to compute the dimension of the space of solutions of the linear system {\em S} with $\mathbb{Z}_2$ coefficients.   In the notations from the beginning of this section, the variables, corresponding to the irreducible components of $\cC$, are $(a_q^i)_{1\leq q \leq 4, \; 1 \leq i \leq 2}$.
  There are $9$ equations of type \eqref{eq:system}, more precisely of type \eqref{eq:sum}, since they are induced by multiplicity $4$ ordinary singularities. These $9$ equations boil down to the following system 
  $$
\begin{cases}  
  a_q^2-a_q^1 =  a_s^2-a_s^1  \textrm{, for any } q,s \in \overline{1,4} \\
  a_1^1+a_2^1+a_3^1+a_4^1=0.
 \end{cases}
 $$
The equations of type \eqref{eq:p| d_i} spell $a_1^1+a_2^1+a_3^1+a_4^1=0$. So a solution to {\em S} depends on $4$ independent parameters, which implies $\beta_2(\cC)=3$.
\end{proof}

\begin{theorem}
\label{thm:uniqueACC}
Let $\cC$ be such that $|\mathcal{N}|=4$ and through each base point-type singularity  passes an even number of lines from $\Irr(\cC)$. Then the weak combinatorial type of $\cC$ is $W$.
\end{theorem}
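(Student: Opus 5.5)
The plan is to pin down the incidence structure between the nine base points and the eight irreducible components purely by counting, and then to observe that this structure is exactly the list defining $W$.

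\textbf{Fiber structure and base points.} First, $|\cN|=4$ forces every fiber $\cC_q$ to carry an $A_3$ point. A union of three lines has only ordinary singularities, so every fiber must be $\cC_q=C_q\cup L_q$ with $L_q$ tangent to $C_q$. By Bezout, two distinct cubics of the pencil meet in $9$ points counted with multiplicity. Since the singularities at the base points are transversal, these are $9$ distinct base points. Each of them is an ordinary quadruple point of $\cC$ lying on exactly one component of each fiber. The tangency point $L_q\cap C_q$ is not a base point, since it is a singular point of $\cC_q$ and a base point would be non-transversal there. Hence $L_q$ contains exactly $3$ base points (its intersection with another fiber's cubic) and $C_q$ contains exactly $6$.

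\textbf{Counting line incidences.} There are $4\cdot 3=12$ incidences between lines and base points. By hypothesis, each base point carries $0$, $2$ or $4$ lines. Two distinct lines $L_i,L_j$ lie in different fibers, so their unique common point is a base point. Consequently distinct base points carrying two lines correspond to distinct pairs $\{i,j\}$. A base point carrying all four lines would exhaust all six intersections $L_i\cap L_j$. The remaining $8$ incidences would then have to be distributed in pairs at points $L_i\cap L_j$, which is impossible. So there are no points with four lines. The $12$ incidences are therefore split into $6$ points with exactly two lines each, one for each of the $6$ pairs $\{i,j\}$. The remaining $3$ base points carry no lines, so they are of type $C_1\cap C_2\cap C_3\cap C_4$. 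At a point $L_i\cap L_j$ the other two fibers pass through their conics $C_k,C_l$. This reproduces exactly items (1)--(9) in the definition of $W$.

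\textbf{Remaining singularities.} It remains to check that $\Sing(\cC)$ contains nothing else and that all intersection numbers agree with $W$. Components from distinct fibers meet only in the base locus, because a common point of two fibers lies on every member of the pencil. Within a fiber, the only singularity is the $A_3$ point $L_q\cap C_q$, with intersection number $2$. At each base point the branches are smooth and pairwise transversal, so all intersection numbers there equal $1$. The local branch data are therefore also those of $W$.

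The main obstacle is the incidence count, specifically ruling out a base point with four lines and ensuring that no pair $L_i\cap L_j$ is reused. Both follow from the elementary fact that two lines meet in exactly one point, which is necessarily a base point.
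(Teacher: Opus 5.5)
Your proof is correct, but it reaches the conclusion by a different route from the paper.

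\textbf{The paper's route.} The paper works one pair of conics at a time. It starts from the four points of $C_1\cap C_2$ and uses Lemma \ref{lem:4points&3conics} together with parity to produce a point $C_1\cap C_2\cap L_3\cap L_4$. It then uses $|L_3\cap C_1|=2$ and the uniqueness of $L_3\cap L_4$ to force the other three points of $C_1\cap C_2$ to be of type $C_1\cap C_2\cap C_3\cap C_4$. Finally it fills in the fourth point of each remaining $C_i\cap C_j$ one by one.

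\textbf{Your route.} You double count line--base point incidences. Each $L_q$ carries exactly three base points, and any two lines meet in exactly one base point. Parity then excludes a four-line point, so the two-line points are indexed bijectively by the six pairs $\{i,j\}$, and the remaining three base points are all-conic.

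\textbf{Comparison.} Your argument:
\begin{itemize}
\item never needs Lemma \ref{lem:4points&3conics};
\item is symmetric in the indices;
\item produces the whole incidence table in one step;
\item makes explicit two points the paper leaves implicit: that $|\cN|=4$ forces every fiber to be $C_q\cup L_q$ with $L_q$ tangent to $C_q$, and that the non-base-point singularities and all intersection numbers agree with $W$.
\end{itemize}
The paper's argument has the advantage of reusing the lemma that already drives Propositions \ref{prop:m=4+1gen.pos.fiber} and \ref{prop:4_A_3sing}, so it stays within the section's toolkit.

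\textbf{A small streamlining.} Fix a line $L_i$. The three other lines each meet $L_i$ at one of its three base points. Each of these points needs an odd number of further lines, so each receives exactly one. This gives the structure and excludes a four-line point at once.
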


\begin{proof}
Consider the four ordinary quadruple base point type singularities $C_1 \cap C_2$. By Lemma \ref{lem:4points&3conics}, $L_3$ passes through at least one of those four points.  $L_4$ must pass through the same point, by the hypothesis on the number of lines through base point singularities. We have  then the singular point  $C_1 \cap C_2 \cap L_3 \cap L_4$. By at least two of the points  $C_1 \cap C_2$ passes the conic $C_3$ (otherwise the the line $L_3$ would intersect the conics $C_1, C_2$ in more than 2 distinct points, contradiction). Again by the  hypothesis, those two points must be of type  $C_1 \cap C_2 \cap C_3 \cap C_4$. Since an even number of lines must pass through the fourth quadruple point  $C_1 \cap C_2$ as well, this point is either of type  $C_1 \cap C_2 \cap L_3 \cap L_4$ or of type $C_1 \cap C_2 \cap C_3 \cap C_4$.  But if it is of type $C_1 \cap C_2 \cap L_3 \cap L_4$ we would have two distinct points in the intersection $ L_3 \cap L_4$ , contradiction. So the fourth intersection point  $C_1 \cap C_2$ must be of type  $C_1 \cap C_2 \cap C_3 \cap C_4$.
To recap, up to now we have found one base point-type singularity of type $$C_1 \cap C_2 \cap L_3 \cap L_4$$ and three base point-type singularities of type $$C_1 \cap C_2 \cap C_3 \cap C_4$$
This inventory already contains three of the four distinct intersection points of type $C_1 \cap C_3$. By Lemma \ref{lem:4points&3conics}, the fourth singularity $C_1 \cap C_3$ is necessarily $C_1 \cap L_2 \cap C_3 \cap L_4$. Likewise, the fourth singularity $C_2 \cap C_3$ is necessarily $L_1 \cap C_2 \cap C_3 \cap L_4$,  the fourth singularity $C_2 \cap C_4$ is necessarily $L_1 \cap C_2 \cap L_3 \cap C_4$,   the fourth singularity $C_1 \cap C_4$ is necessarily $C_1 \cap L_2 \cap L_3 \cap C_4$ and  the fourth singularity $C_3 \cap C_4$ is necessarily $L_1 \cap L_2 \cap C_3 \cap C_4$.
This proves our claim.
\end{proof}

\begin{corollary}
\label{cor:W3_equiv}
Let $\cC$ be  such that $|\mathcal{N}|=4$
The following are equivalent:
\begin{enumerate}
\item  $\cC$ is such that through each base point-type singularity  passes an even number of lines from $\Irr(\cC)$
\item $W_{\cC} = W$
\item $\beta_2(\cC)=3$
\end{enumerate}
\end{corollary}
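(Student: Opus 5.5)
The plan is to prove the cycle of implications (1) $\Rightarrow$ (2) $\Rightarrow$ (3) $\Rightarrow$ (1), using only results already established in this section under the standing assumption $|\cN|=4$.

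\textbf{(1) $\Rightarrow$ (2).} This is exactly Theorem \ref{thm:uniqueACC}. Under $|\cN|=4$, if each base point type singularity meets an even number of lines from $\Irr(\cC)$, then the weak combinatorial type of $\cC$ is $W$.

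\textbf{(2) $\Rightarrow$ (3).} This is Proposition \ref{prop:ACC}. By Proposition \ref{prop:beta_system}, $\beta_2$ depends only on the weak combinatorial type, so $W_{\cC}=W$ gives $\beta_2(\cC)=\beta_2(W)=3$.

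\textbf{(3) $\Rightarrow$ (1).} I argue by contraposition. Suppose (1) fails, so some base point type singularity has an odd number of lines from $\Irr(\cC)$ passing through it. Since $|\cN|=4$, Proposition \ref{prop:4_A_3sing} applies and gives $\beta_2(\cC)=2\neq 3$. Hence $\beta_2(\cC)=3$ forces (1).

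The only point needing care is that Proposition \ref{prop:4_A_3sing} is invoked in its exact hypotheses. The condition $|\cN|=4$ means every fiber carries an $A_3$ point, so each fiber is a line plus a tangent smooth conic, $\cC_q=L_q\cup C_q$. This is precisely the setting of that proposition, so the argument goes through. I expect no genuine obstacle, since each implication reduces directly to an earlier result.
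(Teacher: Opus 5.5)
Your proposal is correct and is essentially the paper's proof. It uses the same cycle (1) $\Rightarrow$ (2) $\Rightarrow$ (3) $\Rightarrow$ (1), obtained respectively from Theorem \ref{thm:uniqueACC}, Proposition \ref{prop:ACC}, and the contrapositive of Proposition \ref{prop:4_A_3sing}; your extra check that $|\cN|=4$ forces every fiber to be a line plus a tangent smooth conic, so that Proposition \ref{prop:4_A_3sing} applies, is accurate and is left implicit in the paper.
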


\begin{proof}
(1) $\Rightarrow$ (2) follows from Theorem \ref{thm:uniqueACC}, (2) $\Rightarrow$ (3) follows from Proposition \ref{prop:ACC} and (3) $\Rightarrow$ (1) follows from Proposition \ref{prop:4_A_3sing}.
\end{proof}
  
\begin{question}
Is the abstract weak combinatorial type $W$ realizable as the weak combinatorial type of a  complex plane projective curve? If so, do there exist two realizations as curves of $W$, such that their respective Alexander polynomials are distinct?
\end{question}

\begin{remark}
It is likely that the multiplicity of the primitive root of  order $2$ of the Alexander polynomial of a pencil-type conic-line arrangement $\cC$ that is the union of  $m=6$  fibers of a conic-line pencil of degree $3$ curves can be computed as well, even without the restrictive hypothesis on the non-base point type singularities from Theorem \ref{thm:Alex_poly_formulas}, by the same techniques used in the $m=4$ case, but the arguments would be more elaborate. We leave that to the interested reader.
\end{remark}

\end{document}